\def\figurename{Figure} 
\renewcommand{\fnum@figure}[1]{\figurename~\thefigure.}
\def\tablename{Table} 
\renewcommand{\fnum@table}[1]{\tablename~\thetable.}
\newtheorem{theorem}{Theorem}[section]
\newtheorem{lemma}[theorem]{Lemma}
\newtheorem{corollary}[theorem]{Corollary}
\newtheorem{proposition}[theorem]{Proposition}
\theoremstyle{definition}
\newtheorem{definition}[theorem]{Definition}
\newtheorem*{notation}{Notation}
\theoremstyle{remark}
\newtheorem{remark}[theorem]{Remark}
\numberwithin{equation}{section}
\begin{document}
\vskip 0.4in
\title{\bfseries\scshape{Two-scale Convergence of Periodic Elliptic Spectral Problems with Indefinite Density Function in Perforated
Domains}}

\author{\bfseries\scshape Hermann Douanla\thanks{E-mail address: \tt{douanla@chalmers.se}}\\
Department of Mathematical Sciences \\Chalmers University of
Technology\\ Gothenburg, SE-41296, Sweden\\\texttt{ }}
\date{}
\maketitle \thispagestyle{empty}
\begin{abstract} \noindent
Spectral asymptotics of linear periodic
elliptic operators with indefinite (sign-changing) density function is investigated in perforated domains with the two-scale convergence method. The
limiting behavior of positive and negative eigencouples depends crucially on
whether the average of the weight over the solid part is positive,
negative or equal to zero. We prove concise homogenization results in all three cases.
\end{abstract}

\noindent {\bf AMS Subject Classification:}35B27, 35B40, 45C05.

\vspace{.08in}

\noindent \textbf{Keywords}: Homogenization, eigenvalue problems,
perforated domains, indefinite weight function, two-scale convergence.

\section{Introduction}\label{s1}
Many nonlinear problems lead, after linearization, to elliptic
eigenvalue problems with an indefinite density function (see e.g.,
the survey paper by de Figueiredo\cite{Figueiredo} and the work of
Hess and Kato\cite{Hess, HessKato}).
 A vast literature in engineering, physics and applied mathematics deals with such problems arising, for instance, in the study
  of transport theory, reaction-diffusion equations and fluid dynamics. In 1904, Holmgren\cite{Holmgren} considered the Dirichlet
   problem $\Delta u+\lambda\rho(x,y)u=0$, on a fixed bounded open set $\Omega\subset\mathbb{R}^2$ when $\rho$ is continuous and changes
    sign; he proved the existence of a double sequence of real eigenvalues of finite multiplicity (one nonnegative and converging
     to $+\infty$, the other one negative and tending to $-\infty$) which can be characterized by the minimax principle. This result
      has been extended to higher dimensions, noncontinuous weight and coefficients in many papers including for example \cite{Bs1, Bs2, Nazarov2}.
       Asymptotic analysis of the eigenvalues has been visited by many mathematicians and is still a hot topic in mathematical analysis.
        Generally speaking, spectral asymptotics is a two folded research area. On the one hand it deals with asymptotic formulas
        (estimates) and asymptotic distribution of the eigenvalues. On the other hand it is concerned with homogenization of eigenvalues
         of oscillating operators on possibly varying domains such as perforated ones. This paper falls within the second framework,
          homogenization theory.

Let $\Omega$ be a bounded domain in $\mathbb{R}^N_x$(the numerical
space of variables $x=(x_1,..., x_N)$, with integer $N\geq 2$) and
let $T\subset Y=(0,1)^N$ be a compact subset of $Y$ in
$\mathbb{R}^N_y$. Unless otherwise specified  we assume that
$\Omega$ and $T$ have $\mathcal{C}^1$ boundaries $\partial\Omega$
and $\partial T$, respectively. For $\varepsilon>0$, we define the
perforated domain $\Omega^\varepsilon$ as follows. we put

\[
t^\varepsilon =\{k\in \mathbb{Z}^N: \varepsilon(k+T)\subset\Omega\},
\]
\[
T^\varepsilon=\bigcup_{k\in t^\varepsilon}\varepsilon(k+T)
\]
and
\[
\Omega^\varepsilon=\Omega\setminus T^\varepsilon.
\]
In this setup, $T$ is the reference hole whereas $\varepsilon(k+T)$
is a hole of size $\varepsilon$ and $T^\varepsilon$ is the
collection of the holes of the perforated domain
$\Omega^\varepsilon$. The family $T^\varepsilon$  is made up with a
finite number of holes since $\Omega$ is bounded. In the sequel, $Y^*$ stands
for $Y\setminus T$ and
$n=(n_i)$ denotes the outer unit normal vector to $\partial
T$ with respect to $Y^*$.
\pagestyle{fancy} \fancyhead{} \fancyhead[EC]{Hermann Douanla}
\fancyhead[EL,OR]{\thepage} \fancyhead[OC]{Spectral Asymptotics in Porous Media } \fancyfoot{}
\renewcommand\headrulewidth{0.5pt}

We are interested in the spectral asymptotics (as $\varepsilon\to
0$) of the linear elliptic eigenvalue problem
\begin{equation} \label{eq1.1}
\left\{\begin{aligned} -\sum_{i,j=1}^N\frac{\partial}{\partial
x_j}\left(a_{ij}(\frac{x}{\varepsilon})\frac{\partial
u_\varepsilon}{\partial x_i}\right)&=\rho(\frac{x}{\varepsilon})\lambda_\varepsilon u_\varepsilon\text{ in } \Omega^\varepsilon\\
\sum_{i,j=1}^N a_{ij}(\frac{x}{\varepsilon})\frac{\partial
u_\varepsilon}{\partial x_j}n_i(\frac{x}{\varepsilon})&=0 \text{ on }\partial
T^\varepsilon\\
u_\varepsilon&=0 \text{ on } \partial \Omega,
\end{aligned}\right.
\end{equation}
where $a_{ij}\in L^\infty(\mathbb{R}^N_y)$ ($ 1\leq i,j\leq N$), with the symmetry condition
$a_{ji}=a_{ij}$, the $Y$-periodicity hypothesis: for every
$k\in\mathbb{Z}^N$ one has $a_{ij}(y+k)=a_{ij}(y)$ almost everywhere
in $y\in\mathbb{R}^N_y $, and finally the (uniform) ellipticity condition:
there exists $\alpha>0$ such that
\begin{equation}\label{eq1.2}
 \sum_{i,j=1}^{N}a_{ij}(y)\xi_j\xi_i\geq\alpha|\xi|^2
\end{equation} for all $\xi\in\mathbb{R}^N$ and for almost all $y\in\mathbb{R}^N_y$, where $|\xi|^2=|\xi_1|^2+\cdots +|\xi_N|^2$.
The density function $\rho\in L^\infty(\mathbb{R}^N_y) $ is
$Y$-periodic and changes sign on $Y^*$, that is, both the set
$\{y\in Y^*, \rho(y)<0\}$ and $\{y\in Y^*, \rho(y)>0\}$ are of
positive Lebesgue measure. This hypothesis makes the problem under
consideration nonstandard. As stated above, it is
well known (see \cite{Holmgren, Nazarov2}) that under the preceding
hypotheses, for each $\varepsilon>0$ the spectrum of (\ref{eq1.1})
is discrete and consists of two infinite sequences
$$
0<\lambda_\varepsilon^{1,+}\leq \lambda_\varepsilon^{2,+}\leq \cdots
\leq \lambda_\varepsilon^{n,+}\leq \dots,\quad \lim_{n\to
+\infty}\lambda_\varepsilon^{n,+}=+\infty
$$
and
$$
0>\lambda_\varepsilon^{1,-}\geq \lambda_\varepsilon^{2,-}\geq \cdots
\geq \lambda_\varepsilon^{n,-}\geq \dots,\quad \lim_{n\to
+\infty}\lambda_\varepsilon^{n,-}=-\infty.
$$
The asymptotic behavior of the eigencouples depends crucially on whether the average of $\rho$ over $Y^*$,
$M_{Y^*}(\rho)=\int_{Y^*}\rho(y)dy$, is positive, negative or equal to zero. All three cases are carefully investigated in this paper.

The homogenization of spectral problems has been widely explored. In
a fixed domain, homogenization of spectral problems with point-wise
positive density function goes back to Kesavan \cite{Kesavan1,
Kesavan2}. In perforated domains, spectral asymptotics was first
considered by  Rauch and Taylor\cite{Rauch, Taylor} but the first
homogenization result in that direction pertains to
Vanninathan\cite{Vanni}. Since then a lot has been written on
spectral asymptotics in perforated media, we mention the works
\cite{Kaizu, Pastukhova, Roppongi2} and the references therein to
cite a few. Homogenization of elliptic operators with sing-changing
density function in a fixed domain has been investigated by Nazarov
et al. \cite{Nazarov1, Nazarov2, Nazarov3} via a combination of
formal asymptotic expansion and Tartar's energy method. Recently,
the Two-scale convergence method has been utilized to handle the
homogenization process for some eigenvalue problems  with constant
density function\cite{douanla1, douanla2} and sign-changing density
function\cite{douanla3}.

In this paper we investigate in periodically perforated domains the spectral
asymptotics of periodic elliptic linear differential operators of
order two in divergence form with a sing-changing density function.
We obtain accurate and concise homogenization results in all three cases: $ M_{Y^*}(\rho)>0 $ (Theorem
\ref{t3.1} and Theorem \ref{t3.2}),\ $M_{Y^*}(\rho)=0$ (Theorem \ref{t3.3}) and  $M_{Y^*}(\rho)<0$ (Theorem \ref{t3.1}
 and Theorem \ref{t3.2}), by using the two-scale
convergence method\cite{AB, GNWL, G89, Zhikov} introduced by
Nguetseng\cite{G89} and further developed by Allaire\cite{AB}.
Namely, if $M_{Y^*}(\rho)>0$
 then the positive eigencouples behave like in the case of point-wise positive density function, i.e.,
for $k\geq 1$, $\lambda^{k,+}_\varepsilon$ converges as $\varepsilon\to 0$ to the $k^{th}$ eigenvalue of the limit
 spectral problem on $\Omega$, corresponding extended eigenfunctions converge along subsequences. As regards the
  "negative" eigencouples, $\lambda^{k,-}_\varepsilon$ converges to $-\infty$ at the rate $\frac{1}{\varepsilon^2}$ and the corresponding eigenfunctions
   oscillate rapidly. We use a factorization technique (\cite{Nazarov3, Vanni}) to prove convergence
    of $\{\lambda^{k,-}_\varepsilon-\frac{1}{\varepsilon^2}\lambda_1^-\}$ - where ($\lambda_1^-, \theta_{1}^-)$
    is the first negative eigencouple to a local spectral problem - to the $k^{th}$ eigenvalue of  a limit spectral problem
     which is different from that obtained for positive eigenvalues. As regards eigenfunctions,
    extensions of  $\{\frac{u^{k,-}_\varepsilon}{(\theta_{1}^-)^\varepsilon}\}_{\varepsilon\in E}$ - where $(\theta_{1}^-)^\varepsilon(x)=\theta_{1}^-(\frac{x}{\varepsilon})$ -
      converge along subsequences to the $k^{th}$ eigenfunctions of the limit problem.
In the case when $M_{Y^*}(\rho)=0$, $\lambda^{k,\pm}_\varepsilon$ converges to $\pm\infty$ at the rate $\frac{1}{\varepsilon}$ and the limit spectral problem generates a quadratic operator pencil. We prove that $\varepsilon\lambda^{k,\pm}_\varepsilon$ converges to the $(k,\pm)^{th}$ eigenvalue of the limit operator, extended eigenfunctions converge along
 subsequences as well. The case when $M_{Y^*}(\rho)<0$ is equivalent to that when $M_{Y^*}(\rho)>0$, just replace $\rho$ with $-\rho$.
 The reader may consider the reiteration procedure in multiscale periodically perforated domains to have some fun.

Unless otherwise specified, vector spaces throughout are considered
over $\mathbb{R}$, and scalar functions are assumed to take real
values. We will make use of the following notations. Let
$F(\mathbb{R}^N)$ be a given function space. We denote by
$F_{per}(Y)$ the space of functions in $F_{loc}(\mathbb{R}^N)$ (when
it makes sense) that are $Y$-periodic, and by
$F_{per}(Y)/\mathbb{R}$ the space of those functions $u\in
F_{per}(Y)$ with $\int_Y u(y)dy=0$. We denote by $H^1_{per}(Y^*)$
the space of functions in $H^1(Y^*)$ assuming same values on the
opposite faces of $Y$ and $H^1_{per}(Y^*)/\mathbb{R}$ stands for the
subset of $H^1_{per}(Y^*)$ made up of functions $u\in
H^1_{per}(Y^*)$ verifying $\int_{Y^*}u(y)dy=0$. Finally, the letter
$E$ denotes throughout a family of  strictly positive real numbers
$(0<\varepsilon<1)$ admitting $0$ as accumulation point. The
numerical space $\mathbb{R}^N$ and its open sets are provided with
the Lebesgue measure denoted by $dx=dx_1...dx_N$. The usual gradient
operator will be denoted by $D$. The rest of the paper is organized
as follows. Section \ref{s2} deals with some preliminary results
while homogenization processes are considered in Section \ref{s3}.

\section{Preliminaries}\label{s2}
We first recall the definition and the main compactness theorems of
the two-scale convergence method. Throughout this section, $\Omega$
is a smooth open bounded set in $\Bbb{R}^N_x$ (integer $N\geq 2$)
and $Y=(0,1)^N$ is the unit cube.

\begin{definition}\label{d2.1}
A sequence $(u_\varepsilon)_{\varepsilon\in E}\subset L^2(\Omega)$
is said to two-scale converge in $L^2(\Omega)$ to some $u_0\in
L^2(\Omega\times Y)$ if as $E\ni\varepsilon\to 0$,
\begin{equation}\label{eq2.14}
    \int_\Omega u_\varepsilon(x)\phi(x,\frac{x}{\varepsilon})dx\to \iint_{\Omega\times Y}u_0(x,y)\phi(x,y)dxdy
\end{equation}
for all $\phi\in L^2(\Omega;\mathcal{C}_{per}(Y))$.
\end{definition}
\begin{notation}
We express this by writing $u_\varepsilon\xrightarrow{2s}u_0$ in
$L^2(\Omega)$.
\end{notation}
The following compactness theorems (see \cite{AB, G89, GW2007}) are cornerstones of the two-scale convergence
method.
\begin{theorem}\label{t2.1}
Let $(u_\varepsilon)_{\varepsilon\in E}$ be a bounded sequence in
$L^2(\Omega)$. Then a subsequence $E'$ can be extracted from  $E$
such that as $E'\ni\varepsilon\to 0$, the sequence
$(u_\varepsilon)_{\varepsilon\in E'}$ two-scale converges in
$L^2(\Omega)$ to some $u_0\in L^2(\Omega\times Y)$.
\end{theorem}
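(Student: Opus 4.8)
The plan is to realize the two-scale limit as the Riesz representative of a bounded linear functional obtained by passing to the limit in the defining pairing (\ref{eq2.14}) along a suitable subsequence. Before extracting anything, I would record the two analytic ingredients that drive the argument. The first is the \emph{mean value property} for admissible test functions: for every $\phi\in L^2(\Omega;\mathcal{C}_{per}(Y))$ the function $x\mapsto \phi(x,x/\varepsilon)$ is measurable on $\Omega$ and
\[
\int_\Omega \phi(x,\tfrac{x}{\varepsilon})\,dx \longrightarrow \iint_{\Omega\times Y}\phi(x,y)\,dx\,dy \quad\text{as } E\ni\varepsilon\to 0 .
\]
This is standard: it holds for tensor products $\phi(x,y)=\psi(x)b(y)$ because of the weak-$*$ convergence $b(\cdot/\varepsilon)\rightharpoonup M_Y(b)$ of rapidly oscillating $Y$-periodic functions, and it extends to all of $L^2(\Omega;\mathcal{C}_{per}(Y))$ by density and the separability of this space. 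Applying it to $|\phi|^2$, which again lies in $L^1(\Omega;\mathcal{C}_{per}(Y))$, yields the key norm estimate
\[
\Big(\int_\Omega |\phi(x,\tfrac{x}{\varepsilon})|^2\,dx\Big)^{1/2} \longrightarrow \|\phi\|_{L^2(\Omega\times Y)} \quad\text{as } E\ni\varepsilon\to 0 ,
\]
so the left member is bounded by $\|\phi\|_{L^2(\Omega\times Y)}+o(1)$.

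Next I would extract the subsequence. Fix a countable set $\mathcal{D}$ dense in the separable space $L^2(\Omega;\mathcal{C}_{per}(Y))$. By the Cauchy--Schwarz inequality and the hypothesis $\|u_\varepsilon\|_{L^2(\Omega)}\le C$, for each fixed $\phi\in\mathcal{D}$ the real numbers $\int_\Omega u_\varepsilon(x)\phi(x,x/\varepsilon)\,dx$ form a bounded sequence; a Cantor diagonal procedure over $\mathcal{D}$ then produces a single subsequence $E'\subset E$ along which $\int_\Omega u_\varepsilon\,\phi(x,x/\varepsilon)\,dx$ converges for every $\phi\in\mathcal{D}$. Call the limit $\mu(\phi)$. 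Combining the uniform bound $\bigl|\int_\Omega u_\varepsilon\phi(x,x/\varepsilon)\,dx\bigr|\le C\bigl(\|\phi\|_{L^2(\Omega\times Y)}+o(1)\bigr)$ with the density of $\mathcal{D}$, a routine $3\varepsilon$-argument shows that the convergence actually holds for every $\phi\in L^2(\Omega;\mathcal{C}_{per}(Y))$, and the resulting functional $\mu$ is linear with $|\mu(\phi)|\le C\|\phi\|_{L^2(\Omega\times Y)}$.

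Finally I would identify $\mu$. By the last estimate $\mu$ is continuous for the $L^2(\Omega\times Y)$-norm on the subspace $L^2(\Omega;\mathcal{C}_{per}(Y))$, which is dense in $L^2(\Omega\times Y)$; hence $\mu$ extends uniquely to a bounded linear functional on $L^2(\Omega\times Y)$. The Riesz representation theorem then supplies a unique $u_0\in L^2(\Omega\times Y)$ with $\mu(\phi)=\iint_{\Omega\times Y}u_0\,\phi\,dx\,dy$ for all $\phi$. Reading this equality back on $L^2(\Omega;\mathcal{C}_{per}(Y))$ is precisely (\ref{eq2.14}) along $E'$, i.e. $u_\varepsilon\xrightarrow{2s}u_0$. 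The main obstacle is the norm-convergence estimate for the test functions themselves: the uniform bound, the passage from the dense family $\mathcal{D}$ to all test functions, and the $L^2(\Omega\times Y)$-continuity of $\mu$ all rest on the admissibility of $\phi$ and of $|\phi|^2$, so the genuine care must go into pinning down the mean value property and the precise class of test functions for which it is valid.
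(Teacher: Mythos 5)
Your proof is correct, and it is essentially the classical argument: the paper does not prove Theorem \ref{t2.1} at all but simply cites \cite{AB, G89, GW2007}, and the proof in those references proceeds exactly as you do — mean value property and norm convergence for admissible test functions, diagonal extraction over a countable dense subset of the separable space $L^2(\Omega;\mathcal{C}_{per}(Y))$, continuity of the limit functional for the $L^2(\Omega\times Y)$-norm, and Riesz representation. Your closing remark correctly identifies where the real work lies (admissibility of $\phi$ and $|\phi|^2$), which is precisely the delicate point emphasized in the cited literature.
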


\begin{theorem}\label{t2.21}
Let $(u_\varepsilon)_{\varepsilon\in E}$ be a bounded sequence in
$H^1(\Omega)$. Then a subsequence $E'$ can be extracted from  $E$
such that as $E'\ni\varepsilon\to 0$
\begin{eqnarray}
  u_\varepsilon &\to& u_0 \ \ \  \text{ in } H^1(\Omega)\text{-weak}\label{eq1.1111}\\
  u_\varepsilon &\to&u_0  \ \ \ \ \ \ \ \ \text{ in } L^2(\Omega)\label{eq1.1222}\\
  \frac{\partial u_\varepsilon}{\partial x_j}&\xrightarrow{2s}&\frac{\partial u_0}{\partial x_j}+\frac{\partial u_1}{\partial y_j}
  \ \ \ \ \text{ in } L^2(\Omega) \ \ (1\leq j \leq N)\label{eq1.1333}
\end{eqnarray}
where $u_0\in H^1(\Omega)$ and $u_1\in L^2(\Omega;H^1_{per}(Y))$.
Moreover, as $E'\ni\varepsilon\to 0$ we have
\begin{equation}\label{eq1.1444}
\int_{\Omega}\frac{u_\varepsilon (x)}{\varepsilon}\psi(x,\frac{x}{\varepsilon})dx\to
\iint_{\Omega\times Y}u_1(x,y)\psi(x,y)dx\,dy
\end{equation}
for $\psi\in \mathcal{D}(\Omega)\otimes (L^2_{per}(Y)/\mathbb{R})$.
\end{theorem}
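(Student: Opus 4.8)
The plan is to combine the standard weak/strong compactness of an $H^1(\Omega)$-bounded sequence with the basic two-scale compactness result (Theorem \ref{t2.1}), and then identify the fine structure of the two-scale limit of the gradient by testing against oscillating test functions that are divergence-free in the fast variable.

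First, since $(u_\varepsilon)_{\varepsilon\in E}$ is bounded in $H^1(\Omega)$ and $\Omega$ is a bounded smooth domain, the Rellich--Kondrachov theorem together with the weak compactness of the Hilbert space $H^1(\Omega)$ allows me to extract a subsequence $E'$ along which $u_\varepsilon\rightharpoonup u_0$ weakly in $H^1(\Omega)$ and $u_\varepsilon\to u_0$ strongly in $L^2(\Omega)$, which is exactly (\ref{eq1.1111})--(\ref{eq1.1222}). The sequence $(u_\varepsilon)$ and each $(\partial u_\varepsilon/\partial x_j)$ are bounded in $L^2(\Omega)$, so by Theorem \ref{t2.1} I may refine $E'$ so that $u_\varepsilon\xrightarrow{2s}u_0$ (the strong $L^2$ limit forces the two-scale limit to be the $y$-independent function $u_0$) and $\partial u_\varepsilon/\partial x_j\xrightarrow{2s}\chi_j$ for some $\chi_j\in L^2(\Omega\times Y)$, $1\le j\le N$. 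It remains to show that $\chi_j=\partial u_0/\partial x_j+\partial u_1/\partial y_j$ for a suitable $u_1\in L^2(\Omega;H^1_{per}(Y))$.

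The heart of the argument is this structure identification. I take a vector test function $\psi=(\psi_j)\in[\mathcal{D}(\Omega)\otimes\mathcal{C}^\infty_{per}(Y)]^N$ that is solenoidal in the fast variable, $\sum_j\partial\psi_j/\partial y_j=0$. Writing $\Psi_{\varepsilon,j}(x)=\psi_j(x,x/\varepsilon)$ and using the chain rule $\partial_{x_j}\Psi_{\varepsilon,j}=(\partial\psi_j/\partial x_j)(x,x/\varepsilon)+\varepsilon^{-1}(\partial\psi_j/\partial y_j)(x,x/\varepsilon)$, the divergence-free condition annihilates the $\varepsilon^{-1}$ term after summation, so integrating $\sum_j\int_\Omega(\partial u_\varepsilon/\partial x_j)\psi_j(x,x/\varepsilon)\,dx$ by parts in $x$ produces an identity whose right-hand side involves only $u_\varepsilon$ and $\partial\psi_j/\partial x_j$. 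Passing to the two-scale limit on the left, using the strong $L^2$ convergence of $u_\varepsilon$ on the right, and integrating back by parts in $x$ (only the $y$-average of $\psi$ survives), I obtain
\[
\sum_{j=1}^N\iint_{\Omega\times Y}\Big(\chi_j-\frac{\partial u_0}{\partial x_j}\Big)\psi_j\,dx\,dy=0
\]
for every such solenoidal $\psi$. By the classical orthogonal decomposition of $L^2_{per}(Y)^N$ into divergence-free fields and $y$-gradients (applied with $x$ as a parameter), the field $(\chi_j-\partial u_0/\partial x_j)_j$ must be a $y$-gradient, i.e.\ there is $u_1\in L^2(\Omega;H^1_{per}(Y)/\mathbb{R})$ with $\chi_j-\partial u_0/\partial x_j=\partial u_1/\partial y_j$, which is (\ref{eq1.1333}). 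I expect this orthogonality/de~Rham step---making rigorous the measurable selection of $u_1$ together with its $H^1_{per}(Y)$ regularity uniformly in $x$---to be the main technical obstacle.

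Finally, for the corrector statement (\ref{eq1.1444}) I test against $\psi\in\mathcal{D}(\Omega)\otimes(L^2_{per}(Y)/\mathbb{R})$. Since $\psi$ has zero $y$-average I solve the periodic Poisson problem $\Delta_y\beta=\psi$ and set $\theta_j=\partial\beta/\partial y_j$, so that $\psi=\mathrm{div}_y\theta$. Rewriting $\varepsilon^{-1}\mathrm{div}_y\theta(x,x/\varepsilon)$ via the chain rule as a difference of $\partial_{x_j}[\theta_j(x,x/\varepsilon)]$ and $(\partial\theta_j/\partial x_j)(x,x/\varepsilon)$, integrating by parts in $x$, and passing to the limit with (\ref{eq1.1222})--(\ref{eq1.1333}), the two contributions carrying $u_0$ cancel after an integration by parts in $x$, leaving $-\sum_j\iint_{\Omega\times Y}(\partial u_1/\partial y_j)\theta_j$; a last integration by parts in $y$ turns this into $\iint_{\Omega\times Y}u_1\,\mathrm{div}_y\theta=\iint_{\Omega\times Y}u_1\psi$, which is exactly (\ref{eq1.1444}). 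The $y$-mean-zero hypothesis on $\psi$ guarantees both the solvability of the auxiliary problem and the consistency of the limit with the constant-in-$y$ ambiguity of $u_1$.
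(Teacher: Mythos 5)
Your proposal is correct and, for the only part the paper actually proves---the convergence (\ref{eq1.1444})---it follows essentially the same route: solve the periodic Poisson problem in $y$ for the mean-zero test function, rewrite $\varepsilon^{-1}\psi(x,\frac{x}{\varepsilon})$ as an exact $x$-divergence, integrate by parts, pass to the limit via (\ref{eq1.1333}), and integrate back by parts in $y$ (your cancellation of the $u_0$ terms by an integration by parts in $x$ is equivalent to the paper's use of $\int_Y D_y\vartheta\,dy=0$). The statements (\ref{eq1.1111})--(\ref{eq1.1333}), which you establish with solenoidal test functions and the orthogonal decomposition of $L^2_{per}(Y)^N$, are simply cited as classical in the paper, and your sketch is precisely that standard Nguetseng--Allaire argument.
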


\begin{proof}
The first part (\ref{eq1.1111})-(\ref{eq1.1333}) is classical (see \cite{AB, G89}). The second part, (\ref{eq1.1444}), was proved in
 \cite{GW2007} in the general framework  of deterministic homogenization but as it is of great importance in this paper and for the sake
  of completeness, we provide its proof in the periodic setting. Let $\psi=(\varphi, \theta)\in \mathcal{D}(\Omega)\times (L^2_{per}(Y)/\mathbb{R})$.
   By the mean value
   zero condition over $Y$ for $\theta$ we conclude that there exists
    a unique solution $\vartheta\in H^1_{per}(Y)/\mathbb{R}$ to
\begin{equation*}
\left\{\begin{aligned} &\Delta_y \vartheta=\theta\quad \text{in} \ Y\\
&\vartheta\in  H^1_{per}(Y)/\mathbb{R}.
\end{aligned}\right.
\end{equation*}
Put $\phi=D_y \vartheta$. We get
\begin{eqnarray*}
  \int_\Omega \frac{u_\varepsilon(x)}{\varepsilon}\psi(x,\frac{x}{\varepsilon})dx&=& \int_\Omega \frac{u_\varepsilon(x)}{\varepsilon}
  \varphi(x)\theta(\frac{x}{\varepsilon})dx= \\
   \int_\Omega u_\varepsilon(x)
  \varphi(x)\text{div}_x\phi(\frac{x}{\varepsilon})dx&=&-\int_\Omega D_x(u_\varepsilon(x)\varphi(x))\cdot\phi(\frac{x}{\varepsilon})dx
\end{eqnarray*}
A limit passage $(\varepsilon\to 0)$ using (\ref{eq1.1333}) yields
\begin{eqnarray*}
 \lim_{\varepsilon\to 0}\int_{\Omega}\frac{u_\varepsilon (x)}{\varepsilon}\psi(x,\frac{x}{\varepsilon})dx&=&
 -\iint_{\Omega\times Y}[D_x u_0(x)+D_y u_1(x,y)]\varphi(x)\cdot\phi(y)dydx\\ &=& -\iint_{\Omega\times Y}D_y u_1(x,y)\varphi(x)\cdot\phi(y)dydx \\
  &=& \iint_{\Omega\times Y} u_1(x,y)\varphi(x)\text{div}_y \phi(y)dydx\\&=& \iint_{\Omega\times Y} u_1(x,y)\psi(x,y)dydx.
\end{eqnarray*}
This completes the proof.
\end{proof}

\begin{remark}
In Theorem \ref{t2.21} the function $u_1$ is unique up to an additive function of variable $x$. We need to fix its choice according to our
future needs. To do this, we introduce the following space
\[
H^{1,*}_{per}(Y)=\{u\in H^1_{per}(Y):\int_{Y^*} u(y)dy=0\}.
\]
\noindent This defines a closed subspace of $H^1_{per}(Y)$ as it is the kernel of the bounded linear functional
$u\mapsto\int_{Y^*}u(y)dy$ defined on $H^1_{per}(Y)$. It is to be noted that for $u\in H^{1,*}_{per}(Y)$,
its restriction to $Y^*$ (which will still be denoted by $u$ in the sequel) belongs to $H^1_{per}(Y^*)/\mathbb{R}$.
\end{remark}
 We will use the following version of Theorem \ref{t2.21}.

\begin{theorem}\label{t2.2}
Let $(u_\varepsilon)_{\varepsilon\in E}$ be a bounded sequence in
$H^1(\Omega)$. Then a subsequence $E'$ can be extracted from  $E$
such that as $E'\ni\varepsilon\to 0$
\begin{eqnarray}
  u_\varepsilon &\to& u_0 \ \ \  \text{ in } H^1(\Omega)\text{-weak}\label{eq1.11}\\
  u_\varepsilon &\to& u_0  \ \ \ \ \ \ \ \ \text{ in } L^2(\Omega)\label{eq1.12}\\
  \frac{\partial u_\varepsilon}{\partial x_j}&\xrightarrow{2s}&\frac{\partial u_0}{\partial x_j}+\frac{\partial u_1}{\partial y_j}
  \ \ \ \ \text{ in } L^2(\Omega) \ \ (1\leq j \leq N)\label{eq1.13}
\end{eqnarray}
where $u_0\in H^1(\Omega)$ and $u_1\in L^2(\Omega;H^{1,*}_{per}(Y))$.
Moreover, as $E'\ni\varepsilon\to 0$ we have
\begin{equation}\label{eq1.14}
\int_{\Omega}\frac{u_\varepsilon (x)}{\varepsilon}\psi(x,\frac{x}{\varepsilon})dx\to
\iint_{\Omega\times Y}u_1(x,y)\psi(x,y)dx\,dy
\end{equation}
for $\psi\in \mathcal{D}(\Omega)\otimes (L^2_{per}(Y)/\mathbb{R})$.
\end{theorem}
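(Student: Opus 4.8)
The plan is to obtain Theorem \ref{t2.2} directly from Theorem \ref{t2.21} by exploiting the non-uniqueness of the corrector $u_1$. As the remark following Theorem \ref{t2.21} records, $u_1$ is determined only up to an additive function of $x$, because only $D_y u_1$ enters the two-scale limit (\ref{eq1.1333}) of the gradient. The idea is to use exactly this freedom to pick the representative whose average over the solid part $Y^*$ vanishes, so that it lands in $L^2(\Omega;H^{1,*}_{per}(Y))$.

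Concretely, I would first apply Theorem \ref{t2.21} to the bounded sequence $(u_\varepsilon)_{\varepsilon\in E}$, extracting a subsequence $E'$ and limits $u_0\in H^1(\Omega)$ and $u_1\in L^2(\Omega;H^1_{per}(Y))$ that satisfy (\ref{eq1.1111})--(\ref{eq1.1444}). Then I would define the normalized corrector
\[
\tilde u_1(x,y)=u_1(x,y)-\frac{1}{|Y^*|}\int_{Y^*}u_1(x,z)\,dz .
\]
A one-line Cauchy--Schwarz estimate shows the subtracted term lies in $L^2(\Omega)$, so $\tilde u_1\in L^2(\Omega;H^1_{per}(Y))$, and by construction $\int_{Y^*}\tilde u_1(x,y)\,dy=0$ for a.e.\ $x$; that is, $\tilde u_1\in L^2(\Omega;H^{1,*}_{per}(Y))$ as desired. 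Taking $u_0$ unchanged gives (\ref{eq1.11}) and (\ref{eq1.12}) immediately. Since the subtracted quantity depends on $x$ alone, $D_y\tilde u_1=D_y u_1$, so (\ref{eq1.13}) holds verbatim with $\tilde u_1$ in place of $u_1$.

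It remains to verify (\ref{eq1.14}). The test functions there have the form $\psi(x,y)=\varphi(x)\theta(y)$ with $\varphi\in\mathcal{D}(\Omega)$ and $\theta\in L^2_{per}(Y)/\mathbb{R}$, so $\int_Y\theta(y)\,dy=0$. The discrepancy between integrating $\tilde u_1$ and $u_1$ against $\psi$ is therefore
\[
-\int_\Omega \frac{\varphi(x)}{|Y^*|}\Big(\int_{Y^*}u_1(x,z)\,dz\Big)\Big(\int_Y\theta(y)\,dy\Big)\,dx=0,
\]
so (\ref{eq1.14}) follows at once from (\ref{eq1.1444}). The mathematical content is light; the only point needing care is that the corrector is normalized by its mean over the \emph{solid} part $Y^*$, whereas the test functions in (\ref{eq1.14}) have zero mean over the \emph{full} cell $Y$. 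Because these two averages are taken over different sets, one must check explicitly that the $Y^*$-normalization constant is annihilated when paired with a $Y$-mean-zero test function, which is precisely what the displayed computation confirms.
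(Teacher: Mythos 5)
Your proposal is correct and follows essentially the same route as the paper: apply Theorem \ref{t2.21}, then renormalize the corrector by subtracting its average over $Y^*$, observing that $D_y$ is unchanged so (\ref{eq1.13}) persists. Your explicit verification that (\ref{eq1.14}) survives this renormalization (because the test functions $\psi=\varphi\otimes\theta$ have zero mean over the full cell $Y$, which annihilates the added function of $x$) is a detail the paper's proof leaves implicit, but it is the same argument.
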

\begin{proof}
Let $\widetilde{u}_1\in L^2(\Omega; H^1_{per}(Y))$ be such that Theorem \ref{t2.21} holds with $\widetilde{u}_1$ in place of $u_1$. Put
\[
u_1(x,y)=\widetilde{u}_1(x,y)-\frac{1}{|Y^*|}\int_{Y^*}\widetilde{u}_1(x,y)dy\qquad (x,y)\in\Omega\times Y,
\]
where $|Y^*|$ stands for the Lebesgue measure of $Y^*$. Then $u_1\in L^2(\Omega; H^{1,*}_{per}(Y))$
and moreover $D_y u_1= D_y\widetilde{u}_1$ so that (\ref{eq1.13}) holds.
\end{proof}

We now gather some preliminary results we will need in our
homogenization processes. We introduce the characteristic function $\chi_{G}$ of
$$
G=\mathbb{R}^N_y\setminus \Theta
$$
with
$$
\Theta=\bigcup_{k\in \mathbb{Z}^N}(k+T).
$$
It follows from the closeness of $T$ that $\Theta$ is closed in
$\mathbb{R}^N_y$ so that $G$ is an open subset of $\mathbb{R}^N_y$.
Next, let $\varepsilon\in E$ be arbitrarily fixed and define
\[
V_\varepsilon =\{u\in H^1(\Omega^\varepsilon) : u=0 \text{ on }
\partial\Omega\}.
\]
We equip $V_\varepsilon$ with the $H^1(\Omega^\varepsilon)$-norm
which makes it a Hilbert space. We recall the following classical
extension result \cite{CSJP}.
\begin{proposition}\label{p3.1}
For each $\varepsilon\in E$ there exists an operator $P_\varepsilon$
of  $V_\varepsilon$ into $H^1_0(\Omega)$ with the following
properties:
\begin{itemize}
    \item $P_\varepsilon$ sends continuously and linearly $V_\varepsilon$ into $H^1_0(\Omega)$.
    \item $(P_\varepsilon v)|_{\Omega^\varepsilon}=v$ for all $v\in V_\varepsilon$.
    \item $\|D(P_\varepsilon v)\|_{L^2(\Omega)^N}\leq c \|D v\|_{L^2(\Omega^\varepsilon)^N}$ for all
    $v\in V_\varepsilon$, where $c$ is a constant independent of $\varepsilon$.
\end{itemize}
\end{proposition}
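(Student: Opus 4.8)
The plan is to reduce the construction of $P_\varepsilon$ to a single extension on the reference cell, then transport it to every cell by scaling and translation and glue the pieces together. First I would build a \emph{fixed} linear extension operator $P_Y\colon H^1(Y^*)\to H^1(Y)$ with $(P_Y v)|_{Y^*}=v$. Since $Y^*=Y\setminus T$ is a bounded domain with $\mathcal{C}^1$ boundary, the classical Sobolev extension theorem furnishes a bounded linear operator $P_0\colon H^1(Y^*)\to H^1(Y)$ extending functions across $\partial T$. The crucial refinement is to upgrade the full $H^1$ bound to a \emph{gradient-only} bound
\[
\|D(P_Y v)\|_{L^2(Y)^N}\leq c_0\,\|Dv\|_{L^2(Y^*)^N}\qquad(v\in H^1(Y^*)),
\]
with $c_0$ depending only on $Y^*$. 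To obtain this I would subtract the mean: setting $m(v)=\frac{1}{|Y^*|}\int_{Y^*}v\,dy$ and defining $P_Y v=P_0(v-m(v))+m(v)$, the operator still extends $v$ and is linear, while $D(P_Y v)=D P_0(v-m(v))$, so the Poincar\'e--Wirtinger inequality on $Y^*$ gives $\|D(P_Y v)\|_{L^2(Y)^N}\leq \|P_0\|\,\|v-m(v)\|_{H^1(Y^*)}\leq c_0\|Dv\|_{L^2(Y^*)^N}$. Here I use that $P_0$ maps constants to constants, equivalently that the extension of a constant may be taken constant.

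Next I would transport $P_Y$ to each perforated cell. For $k\in t^\varepsilon$ and $v\in V_\varepsilon$, writing $\hat v_k(y)=v(\varepsilon(k+y))$ for $y\in Y^*$, I would define $P_\varepsilon v$ on $\varepsilon(k+Y)$ by $(P_\varepsilon v)(x)=(P_Y\hat v_k)\big(\tfrac{x}{\varepsilon}-k\big)$, and leave $P_\varepsilon v=v$ on the remaining (hole-free) part of $\Omega$, which is legitimate because by the very definition of $t^\varepsilon$ no hole is removed outside $\bigcup_{k\in t^\varepsilon}\varepsilon(k+Y)$. Under the change of variables $x=\varepsilon(k+y)$ one has $dx=\varepsilon^N dy$ and $D_x=\varepsilon^{-1}D_y$, so the gradient-only bound is scale invariant:
\[
\int_{\varepsilon(k+Y)}|D(P_\varepsilon v)|^2\,dx=\varepsilon^{N-2}\!\int_{Y}|D(P_Y\hat v_k)|^2\,dy\leq c_0^2\,\varepsilon^{N-2}\!\int_{Y^*}|D\hat v_k|^2\,dy=c_0^2\!\int_{\varepsilon(k+Y^*)}|Dv|^2\,dx.
\]
This is precisely why the gradient-only normalization of $P_Y$ is indispensable: it produces a constant $c_0$ that is independent of $\varepsilon$.

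It remains to check that the pieces glue into a function of $H^1_0(\Omega)$ and to sum the local estimates. Because $T$ is a compact subset lying in the interior of $Y$, the cube boundary $\partial Y$ is contained in $\overline{Y^*}$, so on each interface between adjacent cells the extension coincides with the trace of $v$; since $v\in H^1(\Omega^\varepsilon)$ already has matching traces across the solid interfaces, the assembled function carries no jumps and hence belongs to $H^1(\Omega)$, while the condition $v=0$ on $\partial\Omega$ is untouched, giving $P_\varepsilon v\in H^1_0(\Omega)$. The second bullet, $(P_\varepsilon v)|_{\Omega^\varepsilon}=v$, holds by construction since $P_Y$ restricts to the identity on $Y^*$. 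Summing the scale-invariant estimate over $k\in t^\varepsilon$ yields the third bullet, $\|D(P_\varepsilon v)\|_{L^2(\Omega)^N}\leq c\,\|Dv\|_{L^2(\Omega^\varepsilon)^N}$ with $c=c_0$ independent of $\varepsilon$; combining this with the Poincar\'e inequality on $H^1_0(\Omega)$ (valid since $P_\varepsilon v$ vanishes on $\partial\Omega$) controls the full $H^1_0(\Omega)$ norm and, together with the evident linearity, delivers the first bullet. The main obstacle is the reference-cell step: securing an extension operator whose bound involves only the gradient, since a naive application of the Sobolev extension theorem yields a full-norm estimate that, after scaling, degenerates like $\varepsilon^{-2}$ and destroys the uniformity in $\varepsilon$.
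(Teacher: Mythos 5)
The paper does not actually prove Proposition \ref{p3.1}; it quotes it as a classical result of Cioranescu and Saint Jean Paulin \cite{CSJP}, and your proposal is a reconstruction of that standard proof. Its core is sound: you correctly identify the one point where uniformity in $\varepsilon$ could be lost, and the remedy — replace the full-norm Sobolev extension bound by a gradient-only bound on the reference cell via mean subtraction and Poincar\'e--Wirtinger on $Y^*$, so that the estimate is invariant under the scaling $x=\varepsilon(k+y)$ — is exactly the right mechanism. (One small inaccuracy: your parenthetical claim that you ``use that $P_0$ maps constants to constants'' is not needed; the formula $P_Yv=P_0(v-m(v))+m(v)$ works for an arbitrary bounded extension operator $P_0$, since $m(v)$ is a constant and $P_0$ is only ever applied to the mean-free part.)

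There is, however, a genuine gap in the cell-by-cell assembly. By the paper's definition, $t^\varepsilon=\{k\in\mathbb{Z}^N:\varepsilon(k+T)\subset\Omega\}$: membership requires only the \emph{hole}, not the whole cell $\varepsilon(k+Y)$, to lie inside $\Omega$. Hence in general there are indices $k\in t^\varepsilon$ whose cells cross $\partial\Omega$, and for these your function $\hat v_k(y)=v(\varepsilon(k+y))$ is defined only on $\{y\in Y^*:\varepsilon(k+y)\in\Omega\}$, not on all of $Y^*$, so the formula $(P_\varepsilon v)(x)=(P_Y\hat v_k)\bigl(\tfrac{x}{\varepsilon}-k\bigr)$ is meaningless there; your gluing and boundary-condition arguments tacitly assume every cell indexed by $t^\varepsilon$ is interior to $\Omega$. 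The repair uses the boundary condition built into $V_\varepsilon$ in an essential way: since $v=0$ on $\partial\Omega$ in the trace sense and $\partial\Omega$ is $\mathcal{C}^1$, the extension of $v$ by $0$ to $\varepsilon(k+Y^*)\setminus\Omega$ belongs to $H^1(\varepsilon(k+Y^*))$ and its gradient $L^2$-norm equals $\|Dv\|_{L^2(\varepsilon(k+Y^*)\cap\Omega)^N}$; apply your scaled cell extension to this zero-extended function and restrict the result to $\Omega$. Because $\varepsilon(k+T)\subset\Omega$, the set $\partial\Omega\cap\varepsilon(k+Y)$ is contained in $\varepsilon(k+\overline{Y^*})$, where the extension coincides with the zero-extended $v$; hence the trace of $P_\varepsilon v$ on $\partial\Omega$ is still $0$, the interface traces still match, and summing the cellwise estimates still gives $\|D(P_\varepsilon v)\|_{L^2(\Omega)^N}\leq c\|Dv\|_{L^2(\Omega^\varepsilon)^N}$ with $c$ independent of $\varepsilon$. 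With this amendment your proof is complete; note that this boundary-cell issue is precisely why the proposition is stated for $V_\varepsilon\to H^1_0(\Omega)$ rather than for all of $H^1(\Omega^\varepsilon)$.
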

In the sequel, we will explicitly write  the just-defined extension
operator everywhere needed but we will abuse notations on the local
extension operator (see \cite{CSJP} for its definition): the
extension to $Y$ of $u\in H^1_{per}(Y^*)/\mathbb{R}$ will still be
denoted by $u$ (this extension is an element of $H^{1,*}_{per}(Y)$).

Now, let $Q^\varepsilon=\Omega\setminus (\varepsilon \Theta)$. This
is an open set in $\mathbb{R}^N$ and $\Omega^\varepsilon\setminus
Q^\varepsilon$ is the intersection of $\Omega$ with the collection
of the holes crossing the boundary $\partial\Omega$. We have the
following result which implies that the holes crossing the boundary
$\partial\Omega$ are of no effects as regards the homogenization
processes since they are in arbitrary narrow stripe along the
boundary.

 \begin{lemma}\cite{Gper}\label{l3.1}
 Let $K\subset\Omega$ be a compact set independent of $\varepsilon$. There is some $\varepsilon_0>0$ such
  that $\Omega^\varepsilon\setminus Q^\varepsilon\subset\Omega\setminus K$ for any $0<\varepsilon\leq\varepsilon_0$.
 \end{lemma}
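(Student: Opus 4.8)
The plan is to give a purely geometric, set-theoretic argument; no analysis is needed. The first step is to identify precisely what the set $\Omega^\varepsilon\setminus Q^\varepsilon$ is, keeping careful track of the two distinct hole families in play. A point $x$ lies in $\Omega^\varepsilon\setminus Q^\varepsilon$ if and only if $x\in\Omega$, $x\notin T^\varepsilon$ (so that $x\in\Omega^\varepsilon$), and $x\notin Q^\varepsilon=\Omega\setminus(\varepsilon\Theta)$. Since $x\in\Omega$, the last condition forces $x\in\varepsilon\Theta=\bigcup_{k\in\mathbb{Z}^N}\varepsilon(k+T)$, so $x$ belongs to some scaled hole $\varepsilon(k+T)$. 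On the other hand, $x\notin T^\varepsilon=\bigcup_{k\in t^\varepsilon}\varepsilon(k+T)$ means that this index $k$ cannot belong to $t^\varepsilon$, i.e. $\varepsilon(k+T)\not\subset\Omega$. Thus $\Omega^\varepsilon\setminus Q^\varepsilon$ is exactly the collection (intersected with $\Omega$) of the boundary-crossing holes, which matches the heuristic stated just before the lemma.

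The second step exploits that a boundary-crossing hole lies within $O(\varepsilon)$ of the complement of $\Omega$. Set $d=\mathrm{dist}(K,\mathbb{R}^N\setminus\Omega)$, which is strictly positive because $K$ is compact and contained in the open set $\Omega$. Given $x\in\Omega^\varepsilon\setminus Q^\varepsilon$ with associated hole $\varepsilon(k+T)\not\subset\Omega$, choose $z\in\varepsilon(k+T)$ with $z\notin\Omega$. Because $T\subset Y=(0,1)^N$, the diameter of $T$ is at most $\sqrt N$, whence $|x-z|\le\varepsilon\,\mathrm{diam}(T)\le\varepsilon\sqrt N$, and therefore $\mathrm{dist}(x,\mathbb{R}^N\setminus\Omega)\le\varepsilon\sqrt N$. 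Crucially, the constant $\sqrt N$ is independent of $\varepsilon$, $x$ and $k$, which is what makes the conclusion uniform.

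Finally, I would choose $\varepsilon_0>0$ so small that $\varepsilon_0\sqrt N<d$. For every $0<\varepsilon\le\varepsilon_0$ and every $x\in\Omega^\varepsilon\setminus Q^\varepsilon$ we then get $\mathrm{dist}(x,\mathbb{R}^N\setminus\Omega)\le\varepsilon\sqrt N<d$, so $x$ cannot lie in $K$ (any point of $K$ is at distance at least $d$ from the complement of $\Omega$). Since also $x\in\Omega$, this gives $x\in\Omega\setminus K$, and the desired inclusion $\Omega^\varepsilon\setminus Q^\varepsilon\subset\Omega\setminus K$ follows. The only real subtlety, and the step deserving the most care, is the bookkeeping in the first paragraph: one must not confuse $T^\varepsilon$ (holes entirely inside $\Omega$) with $\varepsilon\Theta$ (all scaled holes), since it is precisely their difference that produces the boundary-crossing holes; once that identification is made, the diameter estimate and the positivity of $d$ close the argument immediately.
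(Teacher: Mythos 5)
Your proof is correct. There is nothing in the paper to compare it against: the author states this lemma with a citation to \cite{Gper} and gives no proof, so your argument is a self-contained replacement for the reference. The two points you isolate are exactly the ones that matter: first, the bookkeeping that any $x\in\Omega^\varepsilon\setminus Q^\varepsilon$ lies in some translate $\varepsilon(k+T)$ with $\varepsilon(k+T)\not\subset\Omega$ (indeed, \emph{every} translate containing $x$ must fail to meet the defining condition of $t^\varepsilon$, else $x\in T^\varepsilon$); second, the uniform diameter bound $\mathrm{diam}\,\varepsilon(k+T)\le\varepsilon\sqrt{N}$ coming from $T\subset Y=(0,1)^N$, which places $x$ within $\varepsilon\sqrt{N}$ of $\mathbb{R}^N\setminus\Omega$ and hence outside $K$ as soon as $\varepsilon\sqrt{N}<\mathrm{dist}(K,\mathbb{R}^N\setminus\Omega)$, this distance being positive since $K$ is compact, $\Omega$ is open, and $\mathbb{R}^N\setminus\Omega\neq\emptyset$ by boundedness. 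This is the standard argument for why boundary-crossing holes live in an $O(\varepsilon)$-stripe along $\partial\Omega$, and it is complete as written.
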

Next, we introduce the space
\[
\mathbb{F}^1_0=H^1_0(\Omega)\times L^2\left(\Omega;
H^{1,*}_{per}(Y)\right).
\]
Endowed with the following norm
\[
\|\textbf{v}\|_{\mathbb{F}^1_0}= \left\|D_x v_0+D_y
v_1\right\|_{L^2(\Omega\times Y)}\ \ \ \
(\textbf{v}=(v_0,v_1)\in\mathbb{F}^1_0),
\]
$\mathbb{F}^1_0$ is a Hilbert space admitting $F_{0}^{\infty
}=\mathcal{D}(\Omega )\times [\mathcal{D}(\Omega )\otimes
\mathcal{C}_{per}^{\infty,*}(Y)]$ (where
$\mathcal{C}_{per}^{\infty,*}(Y)=\{u\in\mathcal{C}_{per}^{\infty}(Y):\int_{Y^*}u(y)dy=0\}$)
as a dense subspace. This being so, for
$(\textbf{u},\textbf{v})\in\mathbb{F}^1_0\times\mathbb{F}^1_0$, let
\begin{equation*}
    a_\Omega(\textbf{u},\textbf{v})=\sum_{i,j=1}^N\iint_{\Omega \times Y^*}a_{ij}(y)\left(\frac{\partial u_0}{\partial x_j}+
    \frac{\partial u_1}{\partial y_j}\right)\left(\frac{\partial v_0}{\partial x_i}+
    \frac{\partial v_1}{\partial y_i}\right)\,dxdy.
 \end{equation*}
This define a symmetric, continuous bilinear form on
$\mathbb{F}^1_0\times \mathbb{F}^1_0$. We will need the following
results whose proof can be found in \cite{douanla1}.
\begin{lemma}\label{l3.2}
Fix $\Phi=(\psi _{0},\psi _{1})\in F_0^\infty$ and define
$\Phi_\varepsilon:\Omega\to \mathbb{R}$ ($\varepsilon>0$) by
\begin{equation*}
    \Phi_\varepsilon(x)=\psi_0(x) +
    \varepsilon\psi_1(x,\frac{x}{\varepsilon})\quad (x\in \Omega).
\end{equation*}
If $(u_\varepsilon)_{\varepsilon\in E}\subset H^1_0(\Omega)$ is such
that
\begin{equation*}
    \frac{\partial u_\varepsilon}{\partial x_i}\xrightarrow{2s} \frac{\partial u_0}{\partial x_i}+\frac{\partial u_1}
    {\partial y_i}\ \ \text{ in }\ \ L^2(\Omega) \ (1\leq i\leq N)
\end{equation*}
as $E\ni \varepsilon\to 0$ for some $\textbf{u}=(u_0, u_1)\in
\mathbb{F}^1_0$, then
 \begin{equation*}
    a^\varepsilon(u_\varepsilon,\Phi_\varepsilon)\to a_\Omega(\textbf{u},\Phi)
 \end{equation*}
as  $E\ni \varepsilon\to 0$, where
\[
 a^\varepsilon(u_\varepsilon,\Phi_\varepsilon)=\sum_{i,j=1}^N\int_{\Omega^\varepsilon} a_{ij}
 (\frac{x}{\varepsilon})\frac{\partial u_\varepsilon}{\partial x_j}
\frac{ \partial \Phi_\varepsilon}{\partial x_i}dx.
\]
\end{lemma}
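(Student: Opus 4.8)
The plan is to expand the gradient of the test function, discard the lower-order piece, replace the perforated domain $\Omega^\varepsilon$ by $\Omega$ weighted with the oscillating characteristic function $\chi_G(\cdot/\varepsilon)$ up to a boundary remainder, and finally pass to the two-scale limit in the resulting integral over $\Omega$. First I would compute, for $x\in\Omega$,
\[
\frac{\partial\Phi_\varepsilon}{\partial x_i}(x)=\frac{\partial\psi_0}{\partial x_i}(x)+\frac{\partial\psi_1}{\partial y_i}\left(x,\frac{x}{\varepsilon}\right)+\varepsilon\frac{\partial\psi_1}{\partial x_i}\left(x,\frac{x}{\varepsilon}\right),
\]
so that $\partial\Phi_\varepsilon/\partial x_i$ splits into a genuinely two-scale oscillating part $\frac{\partial\psi_0}{\partial x_i}(x)+\frac{\partial\psi_1}{\partial y_i}(x,x/\varepsilon)$ and a remainder of order $\varepsilon$. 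Since $\psi_0\in\mathcal{D}(\Omega)$ and $\psi_1\in\mathcal{D}(\Omega)\otimes\mathcal{C}_{per}^{\infty,*}(Y)$, all three pieces are bounded uniformly in $\varepsilon$ and supported in a fixed compact subset of $\Omega$ in the $x$-variable. Because $\partial u_\varepsilon/\partial x_j$ two-scale converges it is bounded in $L^2(\Omega)$; together with $a_{ij}\in L^\infty$ this shows the $\varepsilon$-order term contributes $O(\varepsilon)$ to $a^\varepsilon(u_\varepsilon,\Phi_\varepsilon)$ and drops out in the limit.

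Next I would deal with the perforation. Writing $Q^\varepsilon=\Omega\setminus(\varepsilon\Theta)$, one has $\chi_{Q^\varepsilon}(x)=\chi_G(x/\varepsilon)$ for $x\in\Omega$, whence
\[
a^\varepsilon(u_\varepsilon,\Phi_\varepsilon)=\sum_{i,j=1}^N\int_\Omega\chi_G\!\left(\tfrac{x}{\varepsilon}\right)a_{ij}\!\left(\tfrac{x}{\varepsilon}\right)\frac{\partial u_\varepsilon}{\partial x_j}\frac{\partial\Phi_\varepsilon}{\partial x_i}\,dx+R_\varepsilon,
\]
where $R_\varepsilon$ collects the integrals over $\Omega^\varepsilon\setminus Q^\varepsilon$, i.e. over the holes meeting $\partial\Omega$. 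By Cauchy--Schwarz, $|R_\varepsilon|\le C\,\|\partial u_\varepsilon/\partial x_j\|_{L^2(\Omega)}\,|\Omega^\varepsilon\setminus Q^\varepsilon|^{1/2}$, and Lemma \ref{l3.1} confines $\Omega^\varepsilon\setminus Q^\varepsilon$ to an arbitrarily thin boundary strip $\Omega\setminus K$; letting the compact set $K$ exhaust $\Omega$ forces $|\Omega^\varepsilon\setminus Q^\varepsilon|\to0$, so $R_\varepsilon\to0$.

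It then remains to pass to the limit in the integral over $\Omega$. Setting $g_{ij}(y)=\chi_G(y)a_{ij}(y)\in L^\infty_{per}(Y)$, supported in $Y^*$ inside the unit cell, the integrand is $\partial u_\varepsilon/\partial x_j$ tested against $g_{ij}(x/\varepsilon)\,[\frac{\partial\psi_0}{\partial x_i}(x)+\frac{\partial\psi_1}{\partial y_i}(x,x/\varepsilon)]$, a function of the form $\Phi(x,x/\varepsilon)$ with $\Phi(x,y)=g_{ij}(y)[\frac{\partial\psi_0}{\partial x_i}(x)+\frac{\partial\psi_1}{\partial y_i}(x,y)]$. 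Using the two-scale convergence of $\partial u_\varepsilon/\partial x_j$ to $\frac{\partial u_0}{\partial x_j}+\frac{\partial u_1}{\partial y_j}$, together with the fact that the weight $\chi_G a_{ij}$ restricts the $y$-integration to $Y^*$, the main term converges to
\[
\sum_{i,j=1}^N\iint_{\Omega\times Y^*}a_{ij}(y)\left(\frac{\partial u_0}{\partial x_j}+\frac{\partial u_1}{\partial y_j}\right)\left(\frac{\partial\psi_0}{\partial x_i}+\frac{\partial\psi_1}{\partial y_i}\right)dx\,dy=a_\Omega(\textbf{u},\Phi),
\]
which is the assertion.

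The delicate point is the admissibility of $\Phi$ as a two-scale test function: it is only $L^\infty$ (not continuous) in $y$ because of the weight $g_{ij}$. I expect this to be the main obstacle, and I would handle it by first establishing the auxiliary two-scale convergence $g_{ij}(x/\varepsilon)\,\partial u_\varepsilon/\partial x_j\xrightarrow{2s}g_{ij}(y)(\frac{\partial u_0}{\partial x_j}+\frac{\partial u_1}{\partial y_j})$ for the bounded periodic weight $g_{ij}$, and only then testing against the genuinely smooth factor $\frac{\partial\psi_0}{\partial x_i}(x)+\frac{\partial\psi_1}{\partial y_i}(x,y)\in\mathcal{D}(\Omega;\mathcal{C}^\infty_{per}(Y))$, for which Definition \ref{d2.1} applies directly.
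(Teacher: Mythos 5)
Your proposal is correct and follows essentially the same route as the paper's proof of this lemma (which is not reproduced in the paper but deferred to \cite{douanla1}): expand $D\Phi_\varepsilon$, discard the $O(\varepsilon)$ remainder, pass from $\Omega^\varepsilon$ to $Q^\varepsilon$ via Lemma \ref{l3.1}, and take two-scale limits with the weight $\chi_G a_{ij}$ absorbed into the test function. The auxiliary convergence you isolate, $g(x/\varepsilon)\,\frac{\partial u_\varepsilon}{\partial x_j}\xrightarrow{2s}g(y)\left(\frac{\partial u_0}{\partial x_j}+\frac{\partial u_1}{\partial y_j}\right)$ for $g=\chi_G a_{ij}\in L^\infty_{per}(Y)$, is indeed the only delicate point and does hold: approximate $g$ in $L^2_{per}(Y)$ by continuous $Y$-periodic functions $g_\delta$ with $\|g_\delta\|_{\infty}\le\|g\|_{\infty}$, then use the $L^2(\Omega)$-boundedness of $\left(\frac{\partial u_\varepsilon}{\partial x_j}\right)$ together with $\|(g-g_\delta)(\cdot/\varepsilon)\|_{L^2(\Omega)}^2\to|\Omega|\,\|g-g_\delta\|_{L^2(Y)}^2$ to make the error uniformly small; equivalently, invoke the admissibility of test functions in $L^2_{per}(Y;\mathcal{C}(\overline{\Omega}))$ established in \cite{AB}.
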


We now construct and point out the main properties of the so-called
homogenized coefficients. We put
\begin{equation}\label{eq3.1011}
    a(u,v)=\sum_{i,j=1}^N\int_{Y^*}a_{ij}(y)\frac{\partial u}{\partial y_j}\frac{\partial v}{\partial
    y_i}dy,
\end{equation}
\begin{equation*}
  \qquad\qquad  l_j(v)=\sum_{k=1}^N\int_{Y^*}a_{kj}(y)\frac{\partial
    v}{\partial
    y_k}dy\quad (1\leq j\leq N)
\end{equation*}
and
\begin{equation*}
    l_0(v)=\int_{Y^*}\rho(y)v(y)dy
\end{equation*}
 for $u,v\in H^1_{per}(Y^*)/\mathbb{R}$. Equipped with the norm
\begin{equation}\label{eq3.311}
    \|u\|_{H^1_{per}(Y^*)/\mathbb{R}}=\|D_y u\|_{L^2(Y^*)^N}\ \ \ (u\in H^1_{per}(Y^*)/\mathbb{R}),
\end{equation}
$H^1_{per}(Y^*)/\mathbb{R}$ is a Hilbert space.

\begin{proposition}\label{p3.2}
Let $1\leq j\leq N$.  The local variational problems
\begin{equation}\label{eq3.9}
  u\in H^1_{per}(Y^*)/\mathbb{R} \text{ and } a(u,v)=l_j(v)\  \text{ for all }\  v\in H^1_{per}(Y^*)/\mathbb{R}
\end{equation}
and
\begin{equation}\label{eq3.91}
  u\in H^1_{per}(Y^*)/\mathbb{R} \text{ and } a(u,v)=l_0(v)\  \text{ for all }\
v\in H^1_{per}(Y^*)/\mathbb{R}
\end{equation}
admit each a unique solution, assuming for (\ref{eq3.91}) that $M_{Y^*}(\rho)=0$.
\end{proposition}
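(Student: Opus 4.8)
The plan is to apply the Lax--Milgram theorem to each variational problem, working in the Hilbert space $H^1_{per}(Y^*)/\mathbb{R}$ equipped with the norm (\ref{eq3.311}). The two problems share the same bilinear form $a(\cdot,\cdot)$ defined in (\ref{eq3.1011}), so the bulk of the work is establishing that $a$ is continuous and coercive on $H^1_{per}(Y^*)/\mathbb{R}$; after that, only the boundedness of the two right-hand side functionals $l_j$ and $l_0$ must be checked separately.

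First I would verify that $a$ is a well-defined, symmetric, continuous bilinear form. Symmetry is immediate from $a_{ji}=a_{ij}$, and continuity follows from the bound $a_{ij}\in L^\infty(\mathbb{R}^N_y)$ together with the Cauchy--Schwarz inequality: one gets $|a(u,v)|\le C\,\|D_y u\|_{L^2(Y^*)^N}\|D_y v\|_{L^2(Y^*)^N}$ with $C$ depending only on $\max_{i,j}\|a_{ij}\|_{L^\infty}$. The essential point is coercivity. Here I would invoke the uniform ellipticity condition (\ref{eq1.2}): taking $\xi=D_y u(y)$ pointwise and integrating over $Y^*$ yields
\begin{equation*}
a(u,u)=\sum_{i,j=1}^N\int_{Y^*}a_{ij}(y)\frac{\partial u}{\partial y_j}\frac{\partial u}{\partial y_i}\,dy\ge\alpha\int_{Y^*}|D_y u|^2\,dy=\alpha\,\|u\|_{H^1_{per}(Y^*)/\mathbb{R}}^2,
\end{equation*}
which is exactly coercivity with constant $\alpha$ in the chosen norm. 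It is crucial here that the norm (\ref{eq3.311}) uses only the gradient; this is legitimate precisely because we have quotiented out constants, so $\|D_y u\|_{L^2(Y^*)^N}$ is genuinely a norm on $H^1_{per}(Y^*)/\mathbb{R}$ (a fact guaranteed by the Poincar\'e--Wirtinger inequality on the connected Lipschitz domain $Y^*$, which also confirms completeness and hence the Hilbert space structure asserted just before the statement).

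Next I would check that the linear functionals are bounded on $H^1_{per}(Y^*)/\mathbb{R}$. For $l_j(v)=\sum_{k=1}^N\int_{Y^*}a_{kj}(y)\frac{\partial v}{\partial y_k}\,dy$, boundedness is immediate by Cauchy--Schwarz and $a_{kj}\in L^\infty$, giving $|l_j(v)|\le C\|D_y v\|_{L^2(Y^*)^N}$. For $l_0(v)=\int_{Y^*}\rho(y)v(y)\,dy$ the argument is slightly more delicate because it involves $v$ rather than its gradient: I would bound $|l_0(v)|\le\|\rho\|_{L^\infty}\|v\|_{L^2(Y^*)}$ and then control $\|v\|_{L^2(Y^*)}$ by $\|D_y v\|_{L^2(Y^*)^N}$ via the Poincar\'e--Wirtinger inequality, which applies to representatives of mean zero over $Y^*$. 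This is the one place the hypothesis $M_{Y^*}(\rho)=0$ is used: on the quotient space $H^1_{per}(Y^*)/\mathbb{R}$ the functional $l_0$ must be well defined, i.e. independent of the choice of representative, and adding a constant $c$ to $v$ changes $l_0(v)$ by $c\,M_{Y^*}(\rho)$, so $l_0$ descends to the quotient exactly when $M_{Y^*}(\rho)=0$. With both functionals shown to be elements of the dual, the Lax--Milgram theorem delivers a unique solution to each of (\ref{eq3.9}) and (\ref{eq3.91}), completing the proof.

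The main obstacle, and the step I would flag for care, is the coherence of $l_0$ on the quotient space together with the Poincar\'e inequality underpinning the norm (\ref{eq3.311}); everything else is a routine application of Lax--Milgram once those structural facts are in hand.
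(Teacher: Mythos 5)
Your proof is correct and is exactly the standard Lax--Milgram argument that the paper leaves implicit: Proposition \ref{p3.2} is stated there without any proof, the bilinear form, the gradient norm (\ref{eq3.311}), and the ellipticity-based coercivity being set up just beforehand precisely so that this routine application works. The only cosmetic difference is that the paper realizes $H^1_{per}(Y^*)/\mathbb{R}$ concretely as the subspace of functions with $\int_{Y^*}u\,dy=0$ rather than as an abstract quotient, but your observation that $l_0$ annihilates constants exactly when $M_{Y^*}(\rho)=0$ is the same fact that makes this hypothesis the right compatibility condition for (\ref{eq3.91}).
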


Let $1\leq i,j\leq N$. The homogenized coefficients read
\begin{equation}\label{eq3.10}
    q_{ij}=\int_{Y^*}a_{ij}(y)dy-\sum_{l=1}^N\int_{Y^*}a_{il}(y)\frac{\partial\chi^j}{\partial
    y_l}(y)dy
\end{equation}
where $\chi^j\ \ (1\leq j\leq N)$ is the solution to (\ref{eq3.9}).
We recall that
 $q_{ji}=q_{ij}\ \ (1\leq i,j\leq N)$ and there exists a constant $\alpha_{0}>0$ such that
 \[
   \sum_{i,j=1}^{N}q_{ij}\xi_{j}\xi_{i}\geq\alpha_{0}|\xi|^{2}
\]
for all $\xi\in \mathbb{R}^{N}$ (see e.g., \cite{BLP}).

We now say a few words on the existence result for (\ref{eq1.1}). The weak formulation of (\ref{eq1.1}) reads:
Find $(\lambda_\varepsilon, u_\varepsilon)\in\mathbb{C}\times V_\varepsilon$, ($u_\varepsilon\neq 0$) such that
\begin{equation}\label{eq3.101}
    a^\varepsilon(u_\varepsilon,v)=\lambda_\varepsilon(\rho^\varepsilon u_\varepsilon, v)_{\Omega^\varepsilon}, \quad v\in V_\varepsilon,
\end{equation}
where
$$
(\rho^\varepsilon u_\varepsilon, v)_{\Omega^\varepsilon}=\int_{\Omega^\varepsilon}\rho^\varepsilon u_\varepsilon v dx.
$$
Since $\rho^\varepsilon$ changes sign, the classical results on the spectrum of semi-bounded self-adjoint operators
with compact resolvent do not apply. To handle this, we follow the ideas in \cite{Nazarov3}. The bilinear form
$(\rho^\varepsilon u, v)_{\Omega^\varepsilon}$ defines a bounded linear operator $K^\varepsilon:V_\varepsilon\to V_\varepsilon$ such that
$$
(\rho^\varepsilon u, v)_{\Omega^\varepsilon}=a^\varepsilon(K^\varepsilon u,v) \quad (u,v\in V_\varepsilon).
$$
The operator $K^\varepsilon$ is symmetric and its domains $D(K^\varepsilon)$ coincides with the whole $V_\varepsilon$,
thus it is self-adjoint.  Recall that the gradient norm is equivalent to the $H^1(\Omega^\varepsilon)$-norm on $V_\varepsilon$.
 Looking at $K^\varepsilon u$ as the solution to the boundary value problem
\begin{equation}\label{}\left\{\begin{aligned}
 -div(a(\frac{x}{\varepsilon})D_x(K^\varepsilon u))&=\rho^\varepsilon u \quad \text{in }\Omega^\varepsilon\\
 a(\frac{x}{\varepsilon})D_xK^\varepsilon u\cdot n(\frac{x}{\varepsilon})&=0 \quad \text{ on  } \partial T^\varepsilon\\
 K^\varepsilon u(x)&=0 \quad \text{ on  } \partial \Omega,
\end{aligned}\right.
\end{equation}
we get a constant $C_\varepsilon>0$ such that $\|K^\varepsilon
u\|_{V^\varepsilon}\leq
C_\varepsilon\|u\|_{L^2(\Omega^\varepsilon)}$. As $V^\varepsilon$ is
compactly embedded in $L^2(\Omega^\varepsilon)$ (indeed,
$H^1(\Omega^\varepsilon)\hookrightarrow L^2(\Omega^\varepsilon)$ is
compact as $\partial \Omega^\varepsilon$ is $\mathcal{C}^1$), the
operator $K^\varepsilon$ is compact. We can rewrite (\ref{eq3.101})
as follows
$$
K^\varepsilon u_\varepsilon=\mu_\varepsilon u_\varepsilon,\quad \mu_\varepsilon=\frac{1}{\lambda_\varepsilon}.
$$
Notice that (see e.g., \cite{Bs3}) in the case $\rho\geq 0$ in $Y$, the operator $K^\varepsilon$ is positive and its
 spectrum $\sigma(K^\varepsilon)$ lives in $[0, \|K^\varepsilon\|]$ and $\mu_\varepsilon=0$ belongs to the essential
 spectrum $\sigma_e(K^\varepsilon)$. The essential spectrum of a self-adjoint operator $L$ is by definition
  $\sigma_e(L)=\sigma_p^\infty(L)\cup\sigma_c(L)$, where $\sigma_p^\infty(L)$ is the set of eigenvalues of infinite
   multiplicity and $\sigma_c(L)$ is the continuous spectrum. The spectrum of $K^\varepsilon$ is described by the
   following proposition whose proof is omitted since similar to that of \cite[Lemma 1]{Nazarov3}.
\begin{lemma}\label{l2.1}
Let $\rho\in L^\infty_{per}(Y)$ be such that the sets $\{y\in Y^*: \rho(y)< 0\}$ and $\{y\in Y^*: \rho(y)> 0\}$
are both of positive Lebesgue measure. Then for any $\varepsilon>0$, we have $\sigma(K^\varepsilon)\subset [-\|K^\varepsilon\|, \|K^\varepsilon\|]$
 and $\mu=0$ is the only element of the essential spectrum $\sigma_e(K^\varepsilon)$. Moreover, the discrete spectrum of $K^\varepsilon$ consists
 of two infinite sequences
\begin{eqnarray*}
  \mu_\varepsilon^{1,+}\geq \mu_\varepsilon^{2,+}\geq \cdots \geq\mu_\varepsilon^{k,+}\geq\cdots\to 0^+,&& \\
 \mu_\varepsilon^{1,-}\leq \mu_\varepsilon^{2,-}\leq \cdots \leq\mu_\varepsilon^{k,-}\leq\cdots\to 0^-.&& \\
 \end{eqnarray*}
\end{lemma}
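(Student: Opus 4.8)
The plan is to treat the three assertions in turn, relying on the facts---already established in the discussion preceding the statement---that $K^\varepsilon$ is a \emph{compact, self-adjoint} operator on the infinite-dimensional Hilbert space $V_\varepsilon$, whose inner product $a^\varepsilon(\cdot,\cdot)$ is equivalent to the $H^1(\Omega^\varepsilon)$-one by ellipticity and Proposition \ref{p3.1}.

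First, since $K^\varepsilon$ is bounded and self-adjoint, its spectrum is real and contained in $[-\|K^\varepsilon\|,\|K^\varepsilon\|]$, which is the first assertion. For the essential spectrum I would invoke Riesz--Schauder theory: compactness of $K^\varepsilon$ forces every nonzero point of $\sigma(K^\varepsilon)$ to be an isolated eigenvalue of finite multiplicity, so it lies neither in $\sigma_c(K^\varepsilon)$ nor in $\sigma_p^\infty(K^\varepsilon)$; hence $\sigma_e(K^\varepsilon)\cap(\mathbb{R}\setminus\{0\})=\emptyset$. On the other hand $\dim V_\varepsilon=\infty$ forces $0\in\sigma(K^\varepsilon)$, and in fact $0\in\sigma_e(K^\varepsilon)$: if $\rho^\varepsilon\neq 0$ almost everywhere then $\ker K^\varepsilon=\{0\}$ and $0\in\sigma_c(K^\varepsilon)$, whereas if $\rho^\varepsilon$ vanishes on a set of positive measure then $\ker K^\varepsilon$ is infinite-dimensional and $0\in\sigma_p^\infty(K^\varepsilon)$. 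Either way $\sigma_e(K^\varepsilon)=\{0\}$.

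The heart of the matter is the existence of two infinite sequences of eigenvalues. The key is the identity $a^\varepsilon(K^\varepsilon u,u)=(\rho^\varepsilon u,u)_{\Omega^\varepsilon}=\int_{\Omega^\varepsilon}\rho^\varepsilon u^2\,dx=:q(u)$, valid for all $u\in V_\varepsilon$. If $K^\varepsilon u=\mu u$ with $u\neq 0$ then $\mu\,a^\varepsilon(u,u)=q(u)$, so $\operatorname{sign}\mu=\operatorname{sign}q(u)$. By the standard variational (inertia) description of the spectrum of a compact self-adjoint operator, the number of positive (resp. negative) eigenvalues of $K^\varepsilon$, counted with multiplicity, equals the supremum of the dimensions of the subspaces $L\subset V_\varepsilon$ on which $q$ is positive (resp. negative) definite. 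It therefore suffices to produce, for every integer $n$, an $n$-dimensional subspace of $V_\varepsilon$ on which $q>0$, and likewise one on which $q<0$.

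This construction, where the sign-change hypothesis enters, is the main obstacle. Since $\Omega^\varepsilon$ contains at least one full cell $\varepsilon(k+Y^*)$ with $k\in t^\varepsilon$ and $\rho$ changes sign on $Y^*$, both $\{x\in\Omega^\varepsilon:\rho^\varepsilon(x)>0\}$ and $\{x\in\Omega^\varepsilon:\rho^\varepsilon(x)<0\}$ meet the interior of $\Omega^\varepsilon$ in positive measure. I would fix an open set $\mathcal{O}$ with $\overline{\mathcal{O}}\subset\Omega^\varepsilon$ (so that $H^1_0(\mathcal{O})$, extended by zero, embeds in $V_\varepsilon$) and a $\delta>0$ with $S_\delta=\{x\in\mathcal{O}:\rho^\varepsilon(x)\geq\delta\}$ of positive measure. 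The subspace $L^2(S_\delta)\subset L^2(\mathcal{O})$ is infinite-dimensional and $q(u)\geq\delta\|u\|_{L^2}^2$ there. Because $H^1_0(\mathcal{O})$ is dense in $L^2(\mathcal{O})$ and positive-definiteness of a quadratic form on a finite-dimensional space is an open condition, any $n$-dimensional subspace of $L^2(S_\delta)$ can be approximated by an $n$-dimensional subspace of $H^1_0(\mathcal{O})\subset V_\varepsilon$ on which $q$ stays positive definite; the delicate point is precisely this passage from $L^2$ to the form domain while preserving the sign of $q$. The same argument on $\{\rho^\varepsilon<0\}$ gives subspaces of arbitrary dimension with $q<0$. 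Hence $K^\varepsilon$ has infinitely many eigenvalues of each sign, each of finite multiplicity and accumulating only at $0$; ordering them as $\mu_\varepsilon^{1,+}\geq\mu_\varepsilon^{2,+}\geq\cdots$ and $\mu_\varepsilon^{1,-}\leq\mu_\varepsilon^{2,-}\leq\cdots$, both sequences converge to $0^+$ and $0^-$ respectively, which is the asserted structure of the discrete spectrum.
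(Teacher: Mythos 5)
Your overall architecture cannot be compared line-by-line with the paper's, because the paper gives no proof at all: it declares the proof "omitted since similar to that of \cite[Lemma 1]{Nazarov3}". Measured against that reference argument, your strategy is essentially the same: (a) Riesz--Schauder theory to exclude nonzero points from $\sigma_e(K^\varepsilon)$, (b) the identity $a^\varepsilon(K^\varepsilon u,u)=q(u):=\int_{\Omega^\varepsilon}\rho^\varepsilon u^2dx$ together with the inertia-type variational characterization of the number of positive/negative eigenvalues, and (c) the production of $n$-dimensional subspaces of $V_\varepsilon$ on which $q$ is sign-definite. The only real difference is in step (c): the reference-style argument uses finitely many \emph{disjointly supported} bumps concentrated at Lebesgue density points of $\{\rho^\varepsilon>\delta\}$, so that $q$ is diagonal, hence trivially definite, on their span; you instead approximate an $n$-dimensional subspace of $L^2(S_\delta)$ by one inside $H^1_0(\mathcal{O})$. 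Your variant does close: since $\rho^\varepsilon\in L^\infty$, the form $q$ is continuous on $L^2(\mathcal{O})$, so the matrix $\bigl(\int\rho^\varepsilon v_iv_j\,dx\bigr)$ converges to the positive definite matrix $\bigl(\int\rho^\varepsilon u_iu_j\,dx\bigr)$, and positive definiteness (which also forces linear independence of the $v_i$) is preserved; the step you flag as "delicate" is routine for exactly this reason.

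There is, however, one genuine flaw, in the proof that $0\in\sigma_e(K^\varepsilon)$. You assert that if $\rho^\varepsilon$ vanishes on a set of positive measure then $\ker K^\varepsilon$ is infinite-dimensional. This implication is false. Since $V_\varepsilon$ is dense in $L^2(\Omega^\varepsilon)$, one has $\ker K^\varepsilon=\{u\in V_\varepsilon:\rho^\varepsilon u=0\text{ a.e.}\}$, i.e. the $H^1$-functions vanishing a.e. on $\{\rho^\varepsilon\neq 0\}$; for a zero set $Z=\{\rho^\varepsilon=0\}$ of positive measure this space can still be trivial. For instance, if locally $Z=C\times(0,1)^{N-1}$ with $C$ a fat Cantor set, then any $u\in H^1$ vanishing a.e. on $Z^c$ vanishes a.e.: on almost every line parallel to $x_1$ the absolutely continuous representative of $u$ vanishes on a dense open subset of the line, hence identically. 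This matters because the paper's definition $\sigma_e=\sigma_p^\infty\cup\sigma_c$ is not Weyl's: a compact self-adjoint operator with a \emph{nonzero finite-dimensional} kernel (e.g. $(x_n)\mapsto(0,x_2/2,x_3/3,\dots)$ on $\ell^2$) has $0\notin\sigma_p^\infty\cup\sigma_c$, so the membership $0\in\sigma_e(K^\varepsilon)$ genuinely requires ruling that case out. The repair is simple: dichotomize on whether $\ker K^\varepsilon$ is trivial, not on whether $\rho^\varepsilon$ vanishes somewhere. If $\ker K^\varepsilon=\{0\}$, your $\sigma_c$ argument applies verbatim. If $\ker K^\varepsilon\neq\{0\}$, it is automatically infinite-dimensional: take $0\neq u\in\ker K^\varepsilon$, choose countably many disjoint balls $B_i$ each meeting $\{u\neq 0\}$ in positive measure, and cutoffs $\phi_i\in\mathcal{D}(B_i)$ with $\phi_iu\neq 0$; each $\phi_iu$ lies in $V_\varepsilon$, satisfies $\rho^\varepsilon\phi_iu=0$ a.e., and the $\phi_iu$ are linearly independent by disjointness of supports, so $0\in\sigma_p^\infty$. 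With that substitution your proof is complete, modulo a caveat your proof shares with the lemma statement itself: the claim "for any $\varepsilon>0$" tacitly needs $t^\varepsilon\neq\emptyset$ (your "at least one full cell"), i.e. $\varepsilon$ small enough that both $\{\rho^\varepsilon>0\}$ and $\{\rho^\varepsilon<0\}$ meet $\Omega^\varepsilon$ in positive measure, which is the only regime relevant to the homogenization results.
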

\begin{corollary}\label{c11}
The hypotheses are those of Lemma \ref{l2.1}. Problem (\ref{eq1.1}) has a discrete set of eigenvalues consisting of two sequences
\begin{eqnarray*}
  0<\lambda_\varepsilon^{1,+}\leq\lambda_\varepsilon^{2,+}\leq \cdots\leq \lambda_\varepsilon^{k,+}\leq\cdots \to +\infty, && \\
   0>\lambda_\varepsilon^{1,+}\geq\lambda_\varepsilon^{2,-}\geq \cdots\geq \lambda_\varepsilon^{k,-}\geq\cdots \to -\infty. &&
\end{eqnarray*}
\end{corollary}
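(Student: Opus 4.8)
The plan is to transfer the spectral description of the compact self-adjoint operator $K^\varepsilon$ furnished by Lemma \ref{l2.1} back to the original eigenvalue problem through the correspondence $\mu_\varepsilon=1/\lambda_\varepsilon$, and to read off the monotonicity and the limits from the elementary behaviour of the reciprocal map.

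First I would record the precise equivalence between the weak formulation (\ref{eq3.101}) and the operator equation. By the defining identity $(\rho^\varepsilon u,v)_{\Omega^\varepsilon}=a^\varepsilon(K^\varepsilon u,v)$, a pair $(\lambda_\varepsilon,u_\varepsilon)\in\mathbb{R}\times V_\varepsilon$ with $u_\varepsilon\neq 0$ solves (\ref{eq3.101}) exactly when $a^\varepsilon(u_\varepsilon,v)=\lambda_\varepsilon\,a^\varepsilon(K^\varepsilon u_\varepsilon,v)$ for all $v\in V_\varepsilon$; since $a^\varepsilon$ is an inner product equivalent to the gradient inner product on $V_\varepsilon$, this amounts to $K^\varepsilon u_\varepsilon=\mu_\varepsilon u_\varepsilon$ with $\mu_\varepsilon=1/\lambda_\varepsilon$. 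As $\lambda_\varepsilon$ must be finite and nonzero, the eigenvalues of (\ref{eq1.1}) correspond exactly to the \emph{nonzero} eigenvalues of $K^\varepsilon$, that is, to its discrete spectrum, whereas the value $\mu=0$ belonging to $\sigma_e(K^\varepsilon)$ carries no finite eigenvalue of the original problem. Because $K^\varepsilon$ is compact and self-adjoint, each nonzero eigenvalue has finite multiplicity and $0$ is the only possible accumulation point; consequently each $\lambda_\varepsilon$ is of finite multiplicity as well.

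Next I would invoke Lemma \ref{l2.1}, which exhausts the discrete spectrum by the two monotone sequences $\mu_\varepsilon^{k,+}\to 0^+$ and $\mu_\varepsilon^{k,-}\to 0^-$. Setting $\lambda_\varepsilon^{k,\pm}=1/\mu_\varepsilon^{k,\pm}$, I would check the order reversal induced by $t\mapsto 1/t$ on each half-line. On the positive branch $\mu_\varepsilon^{1,+}\geq\mu_\varepsilon^{2,+}\geq\cdots>0$ is sent to $0<\lambda_\varepsilon^{1,+}\leq\lambda_\varepsilon^{2,+}\leq\cdots$, and $\mu_\varepsilon^{k,+}\to 0^+$ forces $\lambda_\varepsilon^{k,+}\to+\infty$. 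On the negative branch $\mu_\varepsilon^{1,-}\leq\mu_\varepsilon^{2,-}\leq\cdots<0$ is sent to $0>\lambda_\varepsilon^{1,-}\geq\lambda_\varepsilon^{2,-}\geq\cdots$, and $\mu_\varepsilon^{k,-}\to 0^-$ forces $\lambda_\varepsilon^{k,-}\to-\infty$. This produces precisely the two sequences claimed in the corollary.

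There is no serious analytic obstacle: the statement is a direct translation of Lemma \ref{l2.1}. The one point that requires care is the bookkeeping around $\mu=0$. One must make sure that the essential spectrum of $K^\varepsilon$, which sits entirely at the origin, does not spuriously contribute a finite eigenvalue to (\ref{eq1.1}), and conversely that every eigenvalue of the original problem arises from a genuine finite-multiplicity eigenvalue of $K^\varepsilon$. Once this correspondence between the discrete spectrum of $K^\varepsilon$ and the spectrum of (\ref{eq1.1}) is pinned down, the monotonicity, the finite multiplicities, and the limits all follow at once.
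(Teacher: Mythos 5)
Your proposal is correct and follows essentially the same route as the paper: the paper treats the corollary as an immediate consequence of Lemma \ref{l2.1}, having already rewritten the weak formulation (\ref{eq3.101}) as the operator equation $K^\varepsilon u_\varepsilon=\mu_\varepsilon u_\varepsilon$ with $\mu_\varepsilon=1/\lambda_\varepsilon$, so that the two monotone sequences of $K^\varepsilon$-eigenvalues tending to $0^\pm$ invert to the two sequences of $\lambda$'s tending to $\pm\infty$. Your additional care about the essential spectrum at $\mu=0$ contributing no eigenvalue of (\ref{eq1.1}) is exactly the right bookkeeping, and matches what the paper leaves implicit.
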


We are now in a position to state the main results of this paper.
\section{Homogenization results}\label{s3}
In this section we state and prove homogenization results for both cases $M_{Y^*}(\rho)>0$ and  $M_{Y^*}(\rho)=0$.
The homogenization results in the case when $M_{Y^*}(\rho)<0$ can be deducted from the case $M_{Y^*}(\rho)>0$ by replacing $\rho$ with $-\rho$.
We start with the less technical case.
\subsection{The case $M_{Y^*}(\rho)>0$}
We start with the homogenization result for the positive part of the
spectrum $(\lambda_\varepsilon^{k,+}, u_\varepsilon^{k,+})_{\varepsilon\in E}$.
\subsubsection{Positive part of the spectrum}
We assume (this is not a restriction) that the corresponding eigenfunctions
are orthonormalized as follows
\begin{equation}\label{eq3.131}
    \int_{\Omega^\varepsilon}\rho(\frac{x}{\varepsilon})u_\varepsilon^{k,+}u_\varepsilon^{l,+}dx=\delta_{k,l}\quad
    k,l=1,2,\cdots
\end{equation}
The homogenization results states as

\begin{theorem}\label{t3.1}
We assume that $\Omega$ and $T$ have $\mathcal{C}^1$ boundaries. For each $k\geq 1$ and each $\varepsilon\in E$, let
$(\lambda^{k,+}_\varepsilon,u^{k,+}_\varepsilon)$ be the $k^{th}$
positive eigencouple to (\ref{eq1.1}) with $M_{Y^*}(\rho)>0$ and
(\ref{eq3.131}). Then, there exists a subsequence $E'$ of $E$ such
that
\begin{eqnarray}
 \lambda^{k,+}_\varepsilon &\to&  \lambda^{k}_0\quad\text{in }\ \mathbb{R}\ \text{ as }E\ni\varepsilon\to 0\label{eq3.14}\\
  P_\varepsilon u^{k,+}_\varepsilon&\to& u^{k}_0 \quad\text{in }\ \ H^1_0(\Omega)\text{-weak}\text{ as }E'\ni\varepsilon\to 0\label{eq3.15}\\
  P_\varepsilon u^{k,+}_\varepsilon&\to& u^{k}_0 \quad\text{in }\ \ L^2(\Omega)\text{ as }E'\ni\varepsilon\to 0\label{eq3.16}\\
\frac{\partial P_\varepsilon u^{k,+}_\varepsilon}{\partial
x_{j}}&\xrightarrow{2s} & \frac{\partial u_{0}^{k}}{\partial
x_{j}}+\frac{\partial u_{1}^{k}}{\partial y_{j}}\text{\ in
}L^{2}(\Omega)\text{ as }E'\ni\varepsilon\to 0\ (1\leq j\leq
N)\label{eq3.17}
\end{eqnarray}
where $(\lambda^{k}_0,u^{k}_0)\in \mathbb{R}\times
H^1_0(\Omega)$ is the $k^{th}$ eigencouple to the
 spectral problem
\begin{equation}\label{eq3.18}\left\{\begin{aligned}
 -\sum_{i,j=1}^N\frac{\partial}{\partial x_i}\left(\frac{1}{M_{Y^*}(\rho)}q_{ij}\frac{\partial u_0}{\partial x_j}\right)&
 =\lambda_0 u_0\quad \text{in }\Omega\\
 u_0&=0\quad \text{ on  } \partial\Omega\\
 \int_{\Omega}|u_0|^2dx&=\frac{1}{M_{Y^*}(\rho)},
\end{aligned}\right.
\end{equation}
$u_1^{k}\in L^2(\Omega;H^{1,*}_{per}(Y))$ and where the coefficients $\{q_{ij}\}_{1\leq i,j\leq N}$ are defined by
(\ref{eq3.10}). Moreover, for almost every $x\in\Omega$ the following hold true:\\
\textbf{(i)} \ The restriction to $Y^*$ of $u_1^{k}(x)$ is the solution to the
variational problem
\begin{equation}\label{eq3.21}\left\{\begin{aligned}
  &u_1^{k}(x)\in  H^1_{per}(Y^*)/\mathbb{R}\\&
  a(u_1^{k}(x),v)=-\sum_{i,j=1}^N \frac{\partial u^{k}_0}{\partial
  x_j}  \int_{Y^*}a_{ij}(y)\frac{\partial v}{\partial y_i}dy \\&
   \forall v\in H^1_{per}(Y^*)/\mathbb{R},
   \end{aligned}\right.
\end{equation}
the bilinear form $a(\cdot,\cdot)$ being defined by (\ref{eq3.1011});\\
\textbf{(ii)} \ We have
\begin{equation}\label{eq3.22}
    u_1^{k}(x,y)=-\sum_{j=1}^N\frac{\partial u^{k}_0}{\partial
   x_j}(x)\chi^j(y)\qquad\text{a.e. in } (x,y)\in\Omega\times Y^*,
\end{equation}
where $\chi^j$ is the solution to the cell
problem (\ref{eq3.9}).
\end{theorem}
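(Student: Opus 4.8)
The plan is to combine uniform energy estimates, the two-scale compactness of Theorem \ref{t2.2}, the limit passage of Lemma \ref{l3.2}, and the min-max principle for the positive part of the spectrum. First I would fix $k$ and establish a priori bounds. Testing the weak formulation (\ref{eq3.101}) against $v=u_\varepsilon^{k,+}$ and using the normalization (\ref{eq3.131}) gives $a^\varepsilon(u_\varepsilon^{k,+},u_\varepsilon^{k,+})=\lambda_\varepsilon^{k,+}$, so by ellipticity (\ref{eq1.2}) and the uniform extension bound of Proposition \ref{p3.1}, the sequence $P_\varepsilon u_\varepsilon^{k,+}$ is bounded in $H^1_0(\Omega)$ as soon as $\lambda_\varepsilon^{k,+}$ is. To bound $\lambda_\varepsilon^{k,+}$ uniformly I would use the min-max characterization of the positive eigenvalues afforded by Lemma \ref{l2.1}, namely $\lambda_\varepsilon^{k,+}=\min_{\dim V=k}\max_{v\in V}a^\varepsilon(v,v)/(\rho^\varepsilon v,v)_{\Omega^\varepsilon}$, and test it with a fixed $k$-dimensional space of smooth functions of $x$ alone; since $M_{Y^*}(\rho)>0$, the $\rho^\varepsilon$-form stays bounded below on such a space for small $\varepsilon$, giving a bound independent of $\varepsilon$. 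Theorem \ref{t2.2} then yields a subsequence $E'$ along which $\lambda_\varepsilon^{k,+}\to\lambda_0^k$, $P_\varepsilon u_\varepsilon^{k,+}\to u_0^k$ weakly in $H^1_0(\Omega)$ and strongly in $L^2(\Omega)$, and $\partial P_\varepsilon u_\varepsilon^{k,+}/\partial x_j \xrightarrow{2s} \partial u_0^k/\partial x_j+\partial u_1^k/\partial y_j$ with $u_1^k\in L^2(\Omega;H^{1,*}_{per}(Y))$.

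Next I would pass to the limit in (\ref{eq3.101}) using oscillating test functions $\Phi_\varepsilon(x)=\psi_0(x)+\varepsilon\psi_1(x,x/\varepsilon)$ with $\Phi=(\psi_0,\psi_1)\in F_0^\infty$. Lemma \ref{l3.2} converts the left-hand side into $a_\Omega(\textbf{u}^k,\Phi)$ with $\textbf{u}^k=(u_0^k,u_1^k)$. On the right-hand side, writing the integral over $\Omega^\varepsilon$ via $\chi_G(\cdot/\varepsilon)$ (the boundary strip being negligible by Lemma \ref{l3.1}), I would combine the weak-$*$ limit $\chi_G(\cdot/\varepsilon)\rho(\cdot/\varepsilon)\rightharpoonup M_{Y^*}(\rho)$ with the strong $L^2$ convergence of $P_\varepsilon u_\varepsilon^{k,+}$ and $\Phi_\varepsilon\to\psi_0$ to obtain $\lambda_0^k M_{Y^*}(\rho)\int_\Omega u_0^k\psi_0\,dx$. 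By density of $F_0^\infty$ in $\mathbb{F}^1_0$ the resulting identity $a_\Omega(\textbf{u}^k,\Phi)=\lambda_0^k M_{Y^*}(\rho)\int_\Omega u_0^k\psi_0\,dx$ holds for all $\Phi\in\mathbb{F}^1_0$.

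I would then decouple this identity. Taking $\psi_0=0$ isolates the microscopic equation, which is exactly the variational problem (\ref{eq3.21}); its unique solvability (Proposition \ref{p3.2}) and linearity in $D_x u_0^k$ force the corrector representation (\ref{eq3.22}), $u_1^k=-\sum_j(\partial u_0^k/\partial x_j)\chi^j$. Substituting this back and taking $\psi_1=0$ reduces the quadratic form to $\sum_{i,j}\int_\Omega q_{ij}(\partial u_0^k/\partial x_j)(\partial\psi_0/\partial x_i)\,dx$ with $q_{ij}$ precisely the homogenized coefficients (\ref{eq3.10}); dividing by $M_{Y^*}(\rho)$ recovers the weak form of (\ref{eq3.18}). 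The normalization, and in particular $u_0^k\neq 0$, follows by taking $k=l$ in (\ref{eq3.131}) and passing to the limit: the product of $\chi_G(\cdot/\varepsilon)\rho(\cdot/\varepsilon)\rightharpoonup M_{Y^*}(\rho)$ with $|P_\varepsilon u_\varepsilon^{k,+}|^2\to|u_0^k|^2$ in $L^1(\Omega)$ gives $M_{Y^*}(\rho)\int_\Omega|u_0^k|^2\,dx=1$.

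I expect the genuine difficulty to lie in the final step: identifying $\lambda_0^k$ with precisely the $k$-th eigenvalue $\lambda^{(k)}$ of (\ref{eq3.18}) and ensuring no eigenvalue is lost. Passing to the limit in the full orthogonality relations (\ref{eq3.131}) yields $M_{Y^*}(\rho)\int_\Omega u_0^k u_0^l\,dx=\delta_{kl}$, so the limit eigenfunctions are linearly independent and multiplicities are preserved; since $\lambda_\varepsilon^{1,+}\le\lambda_\varepsilon^{2,+}\le\cdots$ passes to the limit, evaluating the limit Rayleigh quotient on $\mathrm{span}(u_0^1,\dots,u_0^k)$ gives $\lambda^{(k)}\le\lambda_0^k$. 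The reverse inequality $\limsup_\varepsilon\lambda_\varepsilon^{k,+}\le\lambda^{(k)}$ is the delicate part: I would insert the first $k$ eigenfunctions of (\ref{eq3.18}), corrected by $\varepsilon\psi_1$ as in Lemma \ref{l3.2}, into the min-max quotient for $\lambda_\varepsilon^{k,+}$, verifying that these correctors span a $k$-dimensional subspace of $V_\varepsilon$ on which the $\rho^\varepsilon$-form is positive for small $\varepsilon$ and that each Rayleigh quotient converges to the corresponding $\lambda^{(j)}$ (using that $a^\varepsilon(\Phi_\varepsilon,\Phi_\varepsilon)\to a_\Omega(\Phi,\Phi)$ and that the $\rho^\varepsilon$-energy converges to $M_{Y^*}(\rho)\int_\Omega|\psi_0|^2\,dx$). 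Combining both inequalities forces $\lambda_0^k=\lambda^{(k)}$, which also upgrades (\ref{eq3.14}) from the extracted subsequence to the whole family $E$.
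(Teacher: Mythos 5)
Your proposal is correct, and for the bulk of the argument it follows exactly the paper's route: a min-max bound on $\lambda^{k,+}_\varepsilon$ (the paper cites Vanninathan for this), the energy estimate with $v=u^{k,+}_\varepsilon$ plus Proposition \ref{p3.1} to bound $P_\varepsilon u^{k,+}_\varepsilon$ in $H^1_0(\Omega)$, Theorem \ref{t2.2} for two-scale compactness, oscillating test functions and Lemma \ref{l3.2} to pass to the limit in the variational problem, decoupling of the resulting two-scale identity into the cell problem and the homogenized equation (\ref{eq3.18}), and the normalization obtained from (\ref{eq3.131}) via the decomposition $\Omega^\varepsilon=Q^\varepsilon\cup(\Omega^\varepsilon\setminus Q^\varepsilon)$, weak-times-strong convergence, and the negligibility of the boundary strip. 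Where you genuinely depart from the paper is the last step. The paper's proof (explicitly an outline) stops at showing that each limit couple $(\lambda_0^k,u_0^k)$ solves the homogenized spectral problem with orthonormal limit eigenfunctions, and simply asserts that $\lambda_0^k$ is the $k$-th eigenvalue; the two-sided argument you sketch --- the inequality $\lambda^{(k)}\le\lambda_0^k$ from evaluating the limit Rayleigh quotient on $\mathrm{span}(u_0^1,\dots,u_0^k)$, and the reverse inequality $\limsup_\varepsilon\lambda^{k,+}_\varepsilon\le\lambda^{(k)}$ from inserting corrector-modified limit eigenfunctions into the $\varepsilon$-level min-max --- is precisely the standard machinery (going back to Kesavan and Vanninathan) needed to justify that assertion, so your proposal is in this respect more complete than the paper's own text. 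Two cautions on that step: the min-max characterization for an indefinite weight is only taken over subspaces on which $(\rho^\varepsilon v,v)_{\Omega^\varepsilon}>0$, so your observation that the $\rho^\varepsilon$-form becomes positive definite on the chosen finite-dimensional trial spaces for small $\varepsilon$ is not cosmetic but essential; and the trial functions $\psi_0+\varepsilon\psi_1(\cdot,\cdot/\varepsilon)$ built from the limit eigenfunctions must be smoothed (by density) to lie in $V_\varepsilon$, with uniform control of the Rayleigh quotient over the unit sphere of the trial space --- both are routine but should be said.
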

\begin{proof}We present only the outlines since this proof is similar but less technical to that
of the case $M_{Y^*}(\rho)=0$.

Fix $k\geq 1$. By means of the minimax principle, as in
\cite{Vanni}, one easily proves the existence of a constant $C$
independent of $\varepsilon$ such that
$\lambda_\varepsilon^{k,+}<C$. Clearly, for fixed
$E\ni\varepsilon>0$, $u^{k,+}_\varepsilon$ lies in $V_\varepsilon$,
and
\begin{equation}\label{eq3.241}
\sum_{i,j=1}^N\int_{\Omega^\varepsilon}
a_{ij}(\frac{x}{\varepsilon})\frac{\partial
u^{k,+}_\varepsilon}{\partial x_j}
 \frac{\partial v}{\partial
 x_i}dx=\lambda^{k,+}_\varepsilon\int_{\Omega^\varepsilon}\rho(\frac{x}{\varepsilon}) u^{k,+}_\varepsilon
 v\,dx
\end{equation}
for any $v\in V_\varepsilon$. Bear in mind that
$\int_{\Omega^\varepsilon}\rho(\frac{x}{\varepsilon})(u^{k,+}_\varepsilon)^2
dx=1$ and choose $v=u^{k,+}_\varepsilon$ in (\ref{eq3.241}). The
boundedness of the sequence
$(\lambda^{k,+}_\varepsilon)_{\varepsilon\in E}$
 and the ellipticity assumption (\ref{eq1.2}) imply at once by means of Proposition \ref{p3.1} that the
 sequence $(P_\varepsilon u^{k,+}_\varepsilon)_{\varepsilon\in E}$ is bounded in $H^1_0(\Omega)$.
  Theorem \ref{t2.2} applies and
gives us $ \textbf{u}^{k}=(u_0^{k},u_1^{k})\in \mathbb{F}^1_0 $ such
that for some $\lambda_0^{k}\in\mathbb{R}$ and some subsequence
$E'\subset E$ we have (\ref{eq3.14})-(\ref{eq3.17}), where
(\ref{eq3.16}) is a direct consequence of (\ref{eq3.15}) by the
Rellich-Kondrachov theorem. For fixed $\varepsilon\in E'$, let
$\Phi_\varepsilon$ be as in Lemma \ref{l3.2}. Multiplying both sides
of  the first equality in (\ref{eq1.1}) by $\Phi_\varepsilon$ and
integrating over $\Omega^\varepsilon$ leads us to the variational
$\varepsilon$-problem
\begin{equation}\label{eq3.24}
\sum_{i,j=1}^N\int_{\Omega^\varepsilon}
a_{ij}(\frac{x}{\varepsilon})\frac{\partial P_\varepsilon
u^{k,+}_\varepsilon}{\partial x_j}
 \frac{\partial \Phi_\varepsilon}{\partial
 x_i}dx=\lambda^{k,+}_\varepsilon\int_{\Omega^\varepsilon}(P_\varepsilon u^{k,+}_\varepsilon)\rho(\frac{x}{\varepsilon})
 \Phi_\varepsilon\,dx.
\end{equation}
Sending $\varepsilon\in E'$ to $0$, keeping
(\ref{eq3.14})-(\ref{eq3.17}) and Lemma \ref{l3.2} in mind, we
obtain
\begin{equation*}
\sum_{i,j=1}^N\iint_{\Omega\times Y^*}a_{ij}(y)
   \left(\frac{\partial u_0^k}{\partial x_j}+
    \frac{\partial u_1^k}{\partial y_j}\right)\left(\frac{\partial \psi_0}{\partial x_i}+
    \frac{\partial \psi_1}{\partial
    y_i}\right)dxdy=\lambda^{k}_0\iint_{\Omega\times Y^*}u^{k}_0\psi_0(x)\rho(y)dxdy.
\end{equation*}
Therefore, $(\lambda^{k}_0,\textbf{u}^{k})\in\mathbb{R}\times
\mathbb{F}^1_0$ solves the following \textit{global homogenized
spectral problem}:

\begin{equation}\label{eq3.26}\left\{\begin{aligned}
  &\text{Find }(\lambda,\textbf{u})\in\mathbb{C}\times
\mathbb{F}^1_0 \text{ such that }\\&
   \sum_{i,j=1}^N\iint_{\Omega\times Y^*}a_{ij}(y)
   \left(\frac{\partial u_0}{\partial x_j}+
    \frac{\partial u_1}{\partial y_j}\right)\left(\frac{\partial \psi_0}{\partial x_i}+
    \frac{\partial \psi_1}{\partial
    y_i}\right)dxdy=\lambda M_{Y^*}(\rho)\int_\Omega u_0\psi_0
    \,dx\\&
   \text{for all } \Phi\in
\mathbb{F}^1_0,
   \end{aligned}\right.
\end{equation}
which leads to the macroscopic and microscopic problems
(\ref{eq3.18})-(\ref{eq3.21}) without any major difficulty.

As regards the normalization condition in (\ref{eq3.18}), we use the
decomposition $ \Omega^\varepsilon=Q^\varepsilon\cup
(\Omega^\varepsilon\setminus Q^\varepsilon)$ and the equality
$Q^\varepsilon=\Omega\cap\varepsilon G$. On the one hand, when $E'\ni
\varepsilon\to 0$,
$$
\int_{Q^\varepsilon}\rho(\frac{x}{\varepsilon})(P_\varepsilon u_\varepsilon^{k,+})(P_\varepsilon u_\varepsilon^{l,+})dx\to
M_{Y^*}(\rho)\int_\Omega u_0^{k} u_0^{l}dx,\quad
k,l=1,2,\cdots
$$
since
$$
\int_{Q^\varepsilon}\rho(\frac{x}{\varepsilon})(P_\varepsilon u_\varepsilon^{k,+})(P_\varepsilon u_\varepsilon^{l,+})\,dx=
\int_{\Omega}\chi_G(\frac{x}{\varepsilon})\rho(\frac{x}{\varepsilon})(P_\varepsilon u_\varepsilon^{k,+})(P_\varepsilon u_\varepsilon^{l,+})\,dx
$$
and
$(P_\varepsilon u_\varepsilon^{k,+})\chi_G^\varepsilon\rho^\varepsilon\rightharpoonup
M_{Y^*}(\rho)u_0^{k}$ in $L^2(\Omega)$-weak and
$P_\varepsilon u_\varepsilon^{l,+}\to u_0^{l}$ in $L^2(\Omega)$-strong as $E'\ni
\varepsilon\to 0$. On the other hand, the same line of reasoning as
in the proof of \cite[Proposition 3.6]{douanla2} leads to
\begin{equation}\label{eq3.261}
\lim_{E'\ni\varepsilon\to 0}\int_{\Omega^\varepsilon\setminus
Q^\varepsilon}\rho(\frac{x}{\varepsilon})(P_\varepsilon u_\varepsilon^{k,+})(P_\varepsilon u_\varepsilon^{l,+})\,dx=0
\end{equation}
The normalization condition in (\ref{eq3.18}) follows thereby. In
fact, we have just proved that $\{u_0^{k,+}\}_{k=1}^\infty$ is an
orthogonal basis in $L^2(\Omega)$.
\end{proof}

\begin{remark}\label{r3.2}
\begin{itemize}
\item The eigenfunctions $\{u_0^{k}\}_{k=1}^\infty$ are orthonormalized by
    $$
       \int_\Omega
       u_0^{k}u_0^{l}dx=\frac{\delta_{k,l}}{M_{Y^*}(\rho)}\quad
       k,l=1,2,3,\cdots
    $$
    \item If $\lambda_0^{k}$ is simple (this is the case for
$\lambda_0^{1}$), then
by Theorem \ref{t3.1}, $\lambda_\varepsilon^{k,+}$ is also simple, for small $\varepsilon$,
and we can choose the eigenfunctions $u_\varepsilon^{k,+}$ such that the convergence results (\ref{eq3.15})-(\ref{eq3.17}) hold for
the whole sequence $E$. In this case, the following corrector type result holds:
\[
\lim_{E\ni\varepsilon\to 0}\left\|D_x(P_\varepsilon u_\varepsilon^{k,+}(\cdot))-D_x u_0^k(\cdot)-D_y u_1^k(\cdot,\frac{.}{\varepsilon}) \right\|_{L^2(\Omega)^N}=0.
\]
\item Replacing $\rho$ with $-\rho$ in (\ref{eq1.1}), Theorem
 \ref{t3.1} also applies to the negative part of the spectrum in the
 case $M_{Y^*}(\rho)<0$.
\end{itemize}
\end{remark}

\subsubsection{Negative part of the spectrum}
We now investigate the negative part of the spectrum  $(\lambda_\varepsilon^{k,-}, u_\varepsilon^{k,-})_{\varepsilon\in E}$.
 Before we can do this we need a few preliminaries and stronger regularity hypotheses on $T$, $\rho$ and the coefficients $(a_{ij})_{i,j=1}^N$.
 We assume in this subsection that $\partial T$ is $C^{2,\delta}$ and $\rho$ and the coefficients
  $(a_{ij})_{i,j=1}^N$ are $\delta$-H\"{o}lder continuous ($0<\delta<1$).

The following spectral problem is well posed
\begin{equation} \label{eq3.39}
\left\{\begin{aligned} &\text{Find }
(\lambda,\theta)\in\mathbb{C}\times
H^1_{per}(Y^*)\\
&-\sum_{i,j=1}^N\frac{\partial}{\partial
y_j}\left(a_{ij}(y)\frac{\partial
\theta}{\partial y_i}\right)=\lambda\rho(y) \theta\ \text{ in }\ \ Y^*\\
&\sum_{i,j=1}^N a_{ij}(y)\frac{\partial \theta}{\partial y_j}n_i=0
\text{ on }\ \partial T
\end{aligned}\right.
\end{equation}
and possesses a spectrum with similar properties to that of (\ref{eq1.1}), two infinite
(positive and negative) sequences. We recall that (\ref{eq3.39}) admits a unique nontrivial
 eigenvalue having an eigenfunction with definite sign, the first negative one, since we have
  $M_{Y^*}(\rho)>0$ (see e.g., \cite{Brown, HessSenn}). In the sequel we will only make use
  of $(\lambda_1^-, \theta_1^-)$, the first negative eigencouple to (\ref{eq3.39}). After proper sign choice we assume
that
\begin{equation}\label{eq3.40}
    \theta_1^->0 \ \  \text{ in }\  \in Y^*.
\end{equation}
We also recall that $\theta_1^-$ is $\delta$-H\"{o}lder continuous(see e.g., \cite{Trudinger}),
hence can be extended to a $Y$-periodic function living in $L^\infty(\mathbb{R}^N_y)$ still denoted
by $\theta_1^-$. Notice that we have
\begin{equation}\label{eq3.391}
    \int_{Y^*}\rho(y)(\theta_1^{-}(y))^2dy<0,
\end{equation}
as is easily seen from the variational equality ( keep the
ellipticity hypothesis (\ref{eq1.2}) in mind)
$$
\sum_{i,j=1}^N\int_{Y^*}a_{ij}\frac{\partial \theta_1^-}{\partial y_j}\frac{\partial \theta_1^-}{\partial y_i}dy=
\lambda_1^-\int_{Y^*}\rho(y)(\theta_1^-(y))^2dy.
$$
Bear in mind that problem (\ref{eq3.39}) induces by a scaling
argument the following equalities:
\begin{equation} \label{eq3.40}
\left\{\begin{aligned} &-\sum_{i,j=1}^N\frac{\partial}{\partial
x_j}\left(a_{ij}(\frac{x}{\varepsilon})\frac{\partial
\theta^\varepsilon}{\partial x_i}\right)=\frac{1}{\varepsilon^2}\lambda\rho(\frac{x}{\varepsilon})
\theta(\frac{x}{\varepsilon})\text{ in } Q^\varepsilon\\
&\sum_{i,j=1}^N a_{ij}(\frac{x}{\varepsilon})\frac{\partial
\theta^\varepsilon}{\partial x_j}n_i(\frac{x}{\varepsilon})=0 \text{ on } \partial
Q^\varepsilon,
\end{aligned}\right.
\end{equation}
where $\theta^\varepsilon(x)=\theta(\frac{x}{\varepsilon})$.
However, $\theta^\varepsilon$ is not zero on $\partial \Omega$. We
now introduce the following spectral problem (with an indefinite
density function)
\begin{equation} \label{eq3.41}\left\{\begin{aligned}
\text{Find } (\xi_\varepsilon,v_\varepsilon)\in\mathbb{C}\times
V_\varepsilon&\\
-\sum_{i,j=1}^N\frac{\partial}{\partial
x_j}\left(\widetilde{a}_{ij}(\frac{x}{\varepsilon})\frac{\partial
v_\varepsilon}{\partial x_i}\right)&=\xi_\varepsilon\widetilde{\rho}(\frac{x}{\varepsilon}) v_\varepsilon(x)\text{ in } \Omega^\varepsilon\\
\sum_{i,j=1}^N
\widetilde{a}_{ij}(\frac{x}{\varepsilon})\frac{\partial
v_\varepsilon}{\partial x_j}n_i(\frac{x}{\varepsilon})&=0 \text{ on
} \partial T^\varepsilon\\v_\varepsilon(x)&=0  \text{ on }
\partial \Omega,
\end{aligned}\right.
\end{equation}
with new spectral eigencouple $(\xi_\varepsilon,
v_\varepsilon)\in\mathbb{C}\times V_\varepsilon$, where
$\widetilde{a}_{ij}(y)=(\theta_1^-)^2(y)a_{ij}(y)$ and
$\widetilde{\rho}(y)=(\theta_1^-)^2(y)\rho(y)$. Notice that $\widetilde{a}_{ij}\in L_{per}^\infty(Y)$ and
$\widetilde{\rho} \in L_{per}^\infty(Y)$. As
$0<c_-\leq \theta_1^-(y)\leq c^+<+\infty$ ($c_-, c^+\in\mathbb{R}$), the operator on the left hand side of
(\ref{eq3.41}) is uniformly elliptic and Theorem \ref{t3.1} applies to the negative part of the spectrum of
(\ref{eq3.41}) (see (\ref{eq3.391}) and Remark \ref{r3.2}). The effective spectral problem for (\ref{eq3.41}) reads
\begin{equation}\label{eq3.411}\left\{\begin{aligned}
 -\sum_{i,j=1}^N\frac{\partial}{\partial x_j}\left(\widetilde{q}_{ij}
 \frac{\partial v_0}{\partial x_i}\right)&=\xi_0 M_{Y^*}(\widetilde{\rho}) v_0\quad \text{in
 }\Omega\\v_0&=0\quad \text{ on  } \partial\Omega\\\int_{\Omega}|v_0|^2dx&=\frac{-1}{M_{Y^*}(\widetilde{\rho})}.
\end{aligned}\right.
\end{equation}
The effective coefficients $\{\widetilde{q}_{ij}\}_{1\leq i,j\leq N}$
 being defined as expected, i.e.,
\begin{equation}\label{eq3.42}
\widetilde{q}_{ij}=\int_{Y^*}\widetilde{a}_{ij}(y)dy-\sum_{l=1}^N\int_{Y^*}\widetilde{a}_{il}(y)\frac{\partial
\widetilde{\chi}^j}{\partial y_l}(y)dy,
\end{equation}
with $\widetilde{\chi}^l\in  H^1_{per}(Y^*)/\mathbb{R}$  $(l=1,...,N)$ being the
solution to the following local problem
\begin{equation} \label{eq3.43}\left\{\begin{aligned}
&\widetilde{\chi}^l\in  H^1_{per}(Y^*)/\mathbb{R}\\&
\sum_{i,j=1}^N\int_{Y^*}\widetilde{a}_{ij}(y)\frac{\partial
\widetilde{\chi}^l}{\partial y_j}\frac{\partial
v}{\partial
y_i}dy=\sum_{i=1}^N\int_{Y^*}\widetilde{a}_{il}(y)\frac{\partial
v}{\partial y_i}dy\\& \text{for all } v\in
 H^1_{per}(Y^*)/\mathbb{R}.\end{aligned}\right.
\end{equation}
We will use the following notation in the sequel:
\begin{equation} \label{eq3.4311}
\widetilde{a}(u,v)=\sum_{i,j=1}^N\int_{Y^*}\widetilde{a}_{ij}(y)\frac{\partial
u}{\partial y_j}\frac{\partial
v}{\partial
y_i}dy\qquad \left(u,v\in  H^1_{per}(Y^*)/\mathbb{R}\right).
\end{equation}
Notice that the spectrum of (\ref{eq3.411}) is as follows
$$
0>\xi_0^1>\xi_0^2\geq \xi_0^3\geq \cdots \geq\xi_0^j \geq \cdots   \to -\infty \text { as } j\to\infty.
$$
Making use of (\ref{eq3.40}), the same line of reasoning as in \cite[Lemma 6.1]{Vanni} shows that the negative
spectral parameters of problems (\ref{eq1.1}) and (\ref{eq3.41}) verify:
$$
u_\varepsilon^{k,-}=(\theta_1^{-})^\varepsilon v_\varepsilon^{k,-}\quad (\varepsilon\in E,\ k=1,2\cdots)
$$
and
$$
\lambda_\varepsilon^{k,-}=\frac{1}{\varepsilon^2}\lambda^-_1+\xi_\varepsilon^{k,-} + o(1), \quad (\varepsilon\in E,\ k=1,2\cdots).
$$
The presence of the term $o(1)$ is due to integrals over
$\Omega^\varepsilon\setminus Q^\varepsilon$,
 like the one in
(\ref{eq3.261}), which converge to zero with $\varepsilon$, remember that (\ref{eq3.40}) holds in $Q^\varepsilon$ but not
$\Omega^\varepsilon$. As will be seen below, the sequence $ (\xi_\varepsilon^{k,-})_{\varepsilon\in E} $ is
bounded in $\mathbb{R}$. In another words, $\lambda_\varepsilon^{k,- }$ is of order $1/\varepsilon^2$ and tends
to $-\infty$ as $\varepsilon$ goes to zero. It is now clear why the limiting behavior of negative eigencouples is
not straightforward as that of positive ones.

The suitable orthonormalization condition for
(\ref{eq3.41}) is the one the reader is expecting:
\begin{equation}\label{eq3.412}
    \int_{\Omega^\varepsilon}\widetilde{\rho}(\frac{x}{\varepsilon})v_\varepsilon^{k,-}v_\varepsilon^{l,-}dx=-\delta_{k,l}\quad
    k,l=1,2,\cdots
\end{equation}
We now state the homogenization theorem for the negative part of the spectrum of (\ref{eq1.1}).

\begin{theorem}\label{t3.2}
We assume that $\partial T$ is $C^{2,\delta}$ and $\rho$ and the coefficients
$(a_{ij})_{i,j=1}^N$ are $\delta$-H\"{o}lder continuous ($0<\delta<1$). For each $k\geq 1$ and each $\varepsilon\in E$, let
$(\lambda^{k,-}_\varepsilon,u^{k,-}_\varepsilon)$ be the $k^{th}$
negative eigencouple to (\ref{eq1.1}) with $M_{Y^*}(\rho)>0$ and
(\ref{eq3.412}). Then, there exists a subsequence $E'$ of $E$ such
that
\begin{eqnarray}
 \lambda^{k,-}_\varepsilon-\frac{\lambda_1^-}{\varepsilon^2} &\to&  \xi^k_0\quad\text{in }\ \mathbb{R}\ \text{ as }E\ni\varepsilon\to 0\label{eq3.44}\\
  P_\varepsilon v^{k,-}_\varepsilon&\to& v^{k}_0 \quad\text{in }\ \ H^1_0(\Omega)\text{-weak}\text{ as }E'\ni\varepsilon\to 0\label{eq3.45}\\
  P_\varepsilon  v^{k,-}_\varepsilon&\to& v^{k}_0 \quad\text{in }\ \ L^2(\Omega)\text{ as }E'\ni\varepsilon\to 0\label{eq3.46}\\
\frac{\partial P_\varepsilon  v^{k,-}_\varepsilon}{\partial
x_{j}}&\xrightarrow{2s} & \frac{\partial v_{0}^{k}}{\partial
x_{j}}+\frac{\partial v_{1}^{k}}{\partial y_{j}}\text{\ in
}L^{2}(\Omega)\text{ as }E'\ni\varepsilon\to 0\ (1\leq j\leq
N)\label{eq3.47}
\end{eqnarray}
where $(\xi^{k}_0,v^{k}_0)\in \mathbb{R}\times H^1_0(\Omega)$ is the
 $k^{th}$ eigencouple to the
 spectral problem
\begin{equation}\label{eq3.48}\left\{\begin{aligned}
 -\sum_{i,j=1}^N\frac{\partial}{\partial x_i}\left(\frac{1}{M_{Y^*}(\widetilde{\rho})}\widetilde{q}_{ij}
 \frac{\partial v_0}{\partial x_j}\right)&=\xi_0 v_0\quad \text{in
 }\Omega\\v_0&=0\quad \text{ on  } \partial\Omega\\\int_{\Omega}|v_0|^2dx&=\frac{-1}{M_{Y^*}(\widetilde{\rho})},
\end{aligned}\right.
\end{equation}
$v_1^{k}\in L^2(\Omega;H^{1,*}_{per}(Y))$ and where the coefficients $\{\widetilde{q}_{ij}\}_{1\leq i,j\leq N}$ are defined
by (\ref{eq3.42}). Moreover, for almost every $x\in\Omega$ the following hold true:\\
\textbf{(i)} \ The restriction to $Y^*$ of $v_1^{k}(x)$ is the solution to the
variational problem
\begin{equation}\label{eq3.49}\left\{\begin{aligned}
  &v_1^{k}(x)\in  H^1_{per}(Y^*)/\mathbb{R}\\&
  \widetilde{a}(v_1^{k}(x),u)=-\sum_{i,j=1}^N \frac{\partial v^{k}_0}{\partial
  x_j}  \int_{Y^*}\widetilde{a}_{ij}(y)\frac{\partial u}{\partial y_i}dy \\&
   \forall u\in  H^1_{per}(Y^*)/\mathbb{R},
   \end{aligned}\right.
\end{equation}
the bilinear form $\widetilde{a}(\cdot,\cdot)$ being defined by (\ref{eq3.4311});\\
\textbf{(ii)} \ We have
\begin{equation}\label{eq3.50}
    v_1^{k}(x,y)=-\sum_{j=1}^N\frac{\partial v^{k}_0}{\partial
   x_j}(x)\widetilde{\chi}^j(y)\qquad\text{a.e. in } (x,y)\in\Omega\times Y^*,
\end{equation}
where $\widetilde{\chi}^j$ is the solution to the cell problem (\ref{eq3.43}).
\end{theorem}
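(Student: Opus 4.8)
The plan is to deduce the entire statement from Theorem \ref{t3.1} applied to the auxiliary problem (\ref{eq3.41}), using the factorization recorded above.

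First I would check that (\ref{eq3.41}) satisfies every hypothesis of Theorem \ref{t3.1}. The coefficients $\widetilde{a}_{ij}=(\theta_1^-)^2a_{ij}$ are symmetric, $Y$-periodic and belong to $L^\infty_{per}(Y)$; because $0<c_-\leq\theta_1^-\leq c^+<+\infty$, they remain uniformly elliptic with ellipticity constant $c_-^2\alpha$. The weight $\widetilde{\rho}=(\theta_1^-)^2\rho$ changes sign on $Y^*$ precisely where $\rho$ does, and by (\ref{eq3.391}) its average is $M_{Y^*}(\widetilde{\rho})=\int_{Y^*}(\theta_1^-)^2\rho\,dy<0$. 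Thus (\ref{eq3.41}) falls in the regime $M_{Y^*}(\widetilde{\rho})<0$, and by the third item of Remark \ref{r3.2} (apply the theorem with $-\widetilde{\rho}$) its negative eigenvalues $\xi_\varepsilon^{k,-}$ are governed by Theorem \ref{t3.1}.

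Invoking Theorem \ref{t3.1} for (\ref{eq3.41}) under the normalization (\ref{eq3.412}) then furnishes, along a subsequence $E'$, the limit $\xi_\varepsilon^{k,-}\to\xi_0^k$, where $\xi_0^k$ is the $k$th eigenvalue of the homogenized problem (\ref{eq3.411}) (equivalently (\ref{eq3.48})), together with the convergences (\ref{eq3.45})--(\ref{eq3.47}) for $P_\varepsilon v_\varepsilon^{k,-}$ and the cell descriptions (i)--(ii), the corrector being $\widetilde{\chi}^j$ from (\ref{eq3.43}) and the homogenized coefficients being exactly the $\widetilde{q}_{ij}$ of (\ref{eq3.42}), since these depend only on $\widetilde{a}_{ij}$ and not on the sign of the weight. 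The factorization $u_\varepsilon^{k,-}=(\theta_1^-)^\varepsilon v_\varepsilon^{k,-}$ identifies the eigenfunctions of (\ref{eq1.1}) with those of (\ref{eq3.41}), so (\ref{eq3.45})--(\ref{eq3.47}) and the characterizations of $v_1^k$ transfer verbatim, while the eigenvalue relation $\lambda_\varepsilon^{k,-}=\varepsilon^{-2}\lambda_1^-+\xi_\varepsilon^{k,-}+o(1)$ gives $\lambda_\varepsilon^{k,-}-\varepsilon^{-2}\lambda_1^-=\xi_\varepsilon^{k,-}+o(1)\to\xi_0^k$, which is (\ref{eq3.44}).

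The hard part will be establishing the factorization identities themselves, in the spirit of \cite[Lemma 6.1]{Vanni}. The subtlety is that the periodically extended $\theta_1^-$ satisfies the cell relation (\ref{eq3.40}) only on $Q^\varepsilon$ and not on all of $\Omega^\varepsilon$, since the holes meeting $\partial\Omega$ destroy the identity near the boundary. Substituting $u_\varepsilon=(\theta_1^-)^\varepsilon v_\varepsilon$ into the weak form (\ref{eq3.241}) therefore reproduces the weak form of (\ref{eq3.41}) only up to terms supported on $\Omega^\varepsilon\setminus Q^\varepsilon$, and these have to be shown to be $o(1)$ --- precisely the $o(1)$ in the eigenvalue relation --- by the computation behind (\ref{eq3.261}) combined with Lemma \ref{l3.1}, which confines the boundary-crossing holes to an arbitrarily thin strip along $\partial\Omega$. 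Finally, a uniform bound on $(\xi_\varepsilon^{k,-})_{\varepsilon\in E}$ from the minimax principle, as in the proof of Theorem \ref{t3.1} and in \cite{Vanni}, justifies the subsequence extraction and closes the argument.
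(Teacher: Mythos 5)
Your proposal is correct and takes essentially the same route as the paper: the paper's proof of Theorem \ref{t3.2} is precisely the discussion preceding its statement, namely checking that the auxiliary problem (\ref{eq3.41}) has uniformly elliptic coefficients $\widetilde{a}_{ij}=(\theta_1^-)^2a_{ij}$ and sign-changing weight with $M_{Y^*}(\widetilde{\rho})<0$ by (\ref{eq3.391}), applying Theorem \ref{t3.1} (through the third item of Remark \ref{r3.2}) under the normalization (\ref{eq3.412}) to get $\xi^k_0$ and the limit problem (\ref{eq3.411})--(\ref{eq3.48}), and transferring back through the Vanninathan-type factorization $u_\varepsilon^{k,-}=(\theta_1^-)^\varepsilon v_\varepsilon^{k,-}$, $\lambda_\varepsilon^{k,-}=\varepsilon^{-2}\lambda_1^-+\xi_\varepsilon^{k,-}+o(1)$. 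Your identification of the real technical content --- the factorization identities themselves and the $o(1)$ terms coming from the holes crossing $\partial\Omega$, controlled as in (\ref{eq3.261}) and Lemma \ref{l3.1} --- matches the paper's own treatment.
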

\begin{remark}
\begin{itemize}
\item The eigenfunctions $\{v_0^{k}\}_{k=1}^\infty$ are orthonormalized by
    $$
       \int_\Omega
       v_0^{k}v_0^{l}dx=\frac{-\delta_{k,l}}{M_{Y^*}(\widetilde{\rho})}\quad
       k,l=1,2,3,\cdots
    $$
    \item If $\xi_0^{k}$ is simple (this is the case for
$\xi_0^{1}$), then
by Theorem \ref{t3.2}, $\lambda_\varepsilon^{k,-}$ is also simple, for small $\varepsilon$,
and we can choose the `eigenfunctions' $v_\varepsilon^{k,-}$ such that the convergence results (\ref{eq3.45})-(\ref{eq3.47}) hold for
the whole sequence $E$. In this case, the following corrector type result holds:
\[
\lim_{E\ni\varepsilon\to 0}\left\|D_x(P_\varepsilon v_\varepsilon^{k,-}(\cdot))-D_x v_0^{k}(\cdot)-D_y v_1^{k}(\cdot,\frac{.}{\varepsilon})
\right\|_{L^2(\Omega)^N}=0.
\]
\item Replacing $\rho$ with $-\rho$ in (\ref{eq1.1}), Theorem
 \ref{t3.2} adapts to the positive part of the spectrum in the
 case $M_{Y^*}(\rho)<0$.
\end{itemize}
\end{remark}

\subsection{The case $M_{Y^*}(\rho)=0$}
We prove a homogenization result for both the positive part and the
negative part of the spectrum simultaneously. As will be clear in the proof of Theorem \ref{t3.3} below, we assume in this case
that the eigenfunctions are orthonormalized as follows
\begin{equation}\label{eq3.22}
   \int_{\Omega^\varepsilon}\rho(\frac{x}{\varepsilon})u_\varepsilon^{k,\pm}u_\varepsilon^{l,\pm}dx=\pm\varepsilon\delta_{k,l}\quad
    k,l=1,2,\cdots
\end{equation}
Let $\chi^0$ be the solution to $(\ref{eq3.91})$ and put
\begin{equation}\label{eq3.23}
    \nu^2=\sum_{i,j=1}^N\int_{Y^*}a_{ij}(y)\frac{\partial \chi^0}{\partial y_j}\frac{\partial \chi^0}{\partial
    y_i}dy.
\end{equation}
Indeed, the right hand side of (\ref{eq3.23}) is positive. We now
recall that the following spectral problem for a quadratic operator
pencil with respect to $\nu$,
\begin{equation}\label{eq3.24}
\left\{\begin{aligned} -\sum_{i,j=1}^N\frac{\partial}{\partial
x_j}\left(q_{ij}\frac{\partial
u_0}{\partial x_i}\right)&=\lambda_0^2\nu^2 u_0\text{ in } \Omega\\
u_0&=0 \text{ on } \partial \Omega,
\end{aligned}\right.
\end{equation}
has a spectrum consisting of two infinite sequences
$$
0<\lambda_0^{1,+}< \lambda_0^{2,+}\leq \cdots \leq
\lambda_0^{k,+}\leq \dots,\quad \lim_{k\to
+\infty}\lambda_0^{k,+}=+\infty
$$
and
$$
0>\lambda_0^{1,-}> \lambda_0^{2,-}\geq \cdots \geq
\lambda_0^{k,-}\geq \dots,\quad \lim_{k\to
+\infty}\lambda_0^{k,-}=-\infty.
$$
with $\lambda^{k,+}_0=-\lambda^{k,-}_0\ \ k=1,2,\cdots$ and with the
corresponding eigenfunctions $u_0^{k,+}=u_0^{k,-}$. We note by
passing that $\lambda^{1,+}_0$ and $\lambda^{1,-}_0$ are simple. We
are now in a position to state the homogenization result in the
present case.

\begin{theorem}\label{t3.3}
We assume that $\Omega$ and $T$ have $\mathcal{C}^1$ boundaries. For each $k\geq 1$ and each $\varepsilon\in E$, let
$(\lambda^{k,\pm}_\varepsilon,u^{k,\pm}_\varepsilon)$ be the
$(k,\pm)^{th}$ eigencouple to (\ref{eq1.1}) with $M_{Y^*}(\rho)=0$
and (\ref{eq3.22}). Then, there exists a subsequence $E'$ of $E$
such that
\begin{eqnarray}
 \varepsilon\lambda^{k,\pm}_\varepsilon &\to&  \lambda^{k,\pm}_0\quad\text{in }\ \mathbb{R}\ \text{ as }E\ni\varepsilon\to 0\label{eq3.25}\\
  P_\varepsilon u^{k,\pm}_\varepsilon&\to& u^{k,\pm}_0 \quad\text{in }\ \ H^1_0(\Omega)\text{-weak}\text{ as }E'\ni\varepsilon\to 0\label{eq3.26}\\
  P_\varepsilon u^{k,\pm}_\varepsilon&\to& u^{k,\pm}_0 \quad\text{in }\ \ L^2(\Omega)\text{ as }E'\ni\varepsilon\to 0\label{eq3.27}\\
\frac{\partial P_\varepsilon u^{k,\pm}_\varepsilon}{\partial
x_{j}}&\xrightarrow{2s} & \frac{\partial u_{0}^{k,\pm}}{\partial
x_{j}}+\frac{\partial u_{1}^{k,\pm}}{\partial y_{j}}\text{\ in
}L^{2}(\Omega)\text{ as }E'\ni\varepsilon\to 0\ (1\leq j\leq
N)\label{eq3.28}
\end{eqnarray}
where $(\lambda^{k,\pm}_0,u^{k,\pm}_0)\in \mathbb{R}\times
H^1_0(\Omega)$ is the $(k,\pm)^{th}$ eigencouple to the
following spectral problem
for a quadratic operator pencil with respect to $\nu$,
\begin{equation}\label{eq3.29}
\left\{\begin{aligned} -\sum_{i,j=1}^N\frac{\partial}{\partial
x_i}\left(q_{ij}\frac{\partial
u_0}{\partial x_j}\right)&=\lambda_0^2\nu^2 u_0\text{ in } \Omega\\
u_0&=0 \text{ on } \partial \Omega,
\end{aligned}\right.
\end{equation}
$u_1^{k,\pm}\in L^2(\Omega;H^{1,*}_{per}(Y))$ and where the coefficients $\{q_{ij}\}_{1\leq i,j\leq N}$ are defined by
(\ref{eq3.10}). We have the following normalization condition
\begin{equation}\label{eq2.291}
    \int_{\Omega}|u_0^{k,\pm}|^2\,dx=\frac{\pm 1}{\lambda_0^{k,\pm} \nu^2}\qquad k=1,2,\cdots
\end{equation}
Moreover, for almost every $x\in\Omega$ the following hold true:\\
\textbf{(i)} \ The restriction to $Y^*$ of $u_1^{k,\pm}(x)$ is the solution to the
variational problem
\begin{equation}\label{eq3.30}\left\{\begin{aligned}
  &u_1^{k,\pm}(x)\in   H^1_{per}(Y^*)/\mathbb{R}\\&
  a(u_1^{k,\pm}(x),v)=\lambda^{k,\pm}_0 u_0^{k,\pm}(x)\int_{Y^*}\rho(y)v(y)dy-\sum_{i,j=1}^N \frac{\partial u^{k,\pm}_0}{\partial
  x_j}(x)  \int_{Y^*}a_{ij}(y)\frac{\partial v}{\partial y_i}dy \\&
   \forall v\in  H^1_{per}(Y^*)/\mathbb{R},
   \end{aligned}\right.
\end{equation}
the bilinear form $ a(\cdot,\cdot)$ being defined by (\ref{eq3.1011});\\
\textbf{(ii)} \ We have
\begin{equation}\label{eq3.31}
   u_1^{k,\pm}(x,y)=\lambda^{k,\pm}_0 u_0^{k,\pm}(x)\chi^0(y)-\sum_{j=1}^N\frac{\partial u^{k,\pm}_0}{\partial
   x_j}(x)\chi^j(y)\quad\text{a.e. in } (x,y)\in\Omega\times Y^*,
\end{equation}
where $\chi^j\ (1\leq j\leq N)$ and $\chi^0$ are the solutions to the cell problems (\ref{eq3.9}) and
(\ref{eq3.91}), respectively.
\end{theorem}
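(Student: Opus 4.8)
The plan is to follow the scheme of Theorem~\ref{t3.1}, the new ingredient being that the density is now factorized through the corrector $\chi^0$ of (\ref{eq3.91}); this is what forces the $1/\varepsilon$ blow-up and the quadratic dependence on $\lambda_0$ in the limit. First I would record that testing the weak formulation (\ref{eq3.101}) with $v=u_\varepsilon^{k,\pm}$ and using (\ref{eq3.22}) gives $a^\varepsilon(u_\varepsilon^{k,\pm},u_\varepsilon^{k,\pm})=\pm\varepsilon\lambda_\varepsilon^{k,\pm}$, so the entire a~priori estimate reduces to bounding $\varepsilon\lambda_\varepsilon^{k,\pm}$. I would extract this bound from the minimax characterization attached to the compact operator $K^\varepsilon$ of Lemma~\ref{l2.1}, namely $\lambda_\varepsilon^{k,+}=\min_{\dim S=k}\max_{v\in S}a^\varepsilon(v,v)/(\rho^\varepsilon v,v)_{\Omega^\varepsilon}$, by building a fixed $k$-dimensional trial space spanned by $v^\varepsilon_m(x)=\phi_m(x)\bigl(1+\varepsilon\Lambda\,\chi^0(x/\varepsilon)\bigr)$ with $\phi_m\in\mathcal D(\Omega)$ independent and a constant $\Lambda>0$. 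Using the strong form $-\mathrm{div}_y(aD_y\chi^0)=\rho$ of (\ref{eq3.91}) one finds $\tfrac1\varepsilon(\rho^\varepsilon v^\varepsilon_m,v^\varepsilon_m)_{\Omega^\varepsilon}\to 2\Lambda\nu^2\int_\Omega\phi_m^2\,dx>0$ (the un-scaled term $\int\rho^\varepsilon\phi_m^2$ is only $o(\varepsilon)$ precisely because $M_{Y^*}(\rho)=0$), while $a^\varepsilon(v^\varepsilon_m,v^\varepsilon_m)$ stays bounded; hence $\varepsilon\lambda_\varepsilon^{k,+}\le C$, and symmetrically with $\Lambda<0$ for the negative part. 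Consequently $a^\varepsilon(u_\varepsilon^{k,\pm},u_\varepsilon^{k,\pm})\le C$, so by (\ref{eq1.2}) and Proposition~\ref{p3.1} the extensions $(P_\varepsilon u_\varepsilon^{k,\pm})$ are bounded in $H^1_0(\Omega)$, and Theorem~\ref{t2.2} furnishes a subsequence $E'$, a limit $(u_0^{k,\pm},u_1^{k,\pm})\in\mathbb F^1_0$ and $\lambda_0^{k,\pm}\in\mathbb R$ realizing (\ref{eq3.25})--(\ref{eq3.28}).

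The heart of the proof is the limit passage on the right-hand side. Writing the test function $\Phi_\varepsilon$ of Lemma~\ref{l3.2}, the right-hand side of the $\varepsilon$-problem is $(\varepsilon\lambda_\varepsilon^{k,\pm})\cdot\tfrac1\varepsilon\int_{\Omega^\varepsilon}\rho^\varepsilon(P_\varepsilon u_\varepsilon^{k,\pm})\Phi_\varepsilon\,dx$, and the $1/\varepsilon$ must be absorbed. The key is the identity $\rho^\varepsilon=-\varepsilon^2\,\mathrm{div}_x\!\bigl(a(\cdot/\varepsilon)D_x(\chi^0)^\varepsilon\bigr)$, valid cellwise on the full cells of $Q^\varepsilon$: integrating by parts, the conormal of $(\chi^0)^\varepsilon$ vanishing on $\partial T^\varepsilon$ by the Neumann condition in (\ref{eq3.91}) and $u_\varepsilon,\Phi_\varepsilon$ vanishing on $\partial\Omega$, the $\varepsilon^2$ collapses and one is left with $\int_{\Omega^\varepsilon}\sum_{i,j}\bigl(a_{ij}\tfrac{\partial\chi^0}{\partial y_i}\bigr)(\cdot/\varepsilon)\,\partial_{x_j}\!\bigl((P_\varepsilon u_\varepsilon^{k,\pm})\Phi_\varepsilon\bigr)\,dx$. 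Passing to the two-scale limit with (\ref{eq3.28}) and treating $\Omega^\varepsilon\setminus Q^\varepsilon$ as $o(1)$ via Lemma~\ref{l3.1} (exactly as for (\ref{eq3.261})), and using Lemma~\ref{l3.2} on the left, I obtain the global homogenized problem
\begin{equation*}
a_\Omega(\textbf{u},\Phi)=\lambda_0^{k,\pm}\iint_{\Omega\times Y^*}\sum_{i,j=1}^N a_{ij}(y)\frac{\partial\chi^0}{\partial y_i}\left[\Bigl(\frac{\partial u_0}{\partial x_j}+\frac{\partial u_1}{\partial y_j}\Bigr)\psi_0+u_0\Bigl(\frac{\partial\psi_0}{\partial x_j}+\frac{\partial\psi_1}{\partial y_j}\Bigr)\right]dxdy
\end{equation*}
for every $\Phi=(\psi_0,\psi_1)\in\mathbb F^1_0$.

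Next I would decouple this into the microscopic and macroscopic problems. Choosing $\psi_0=0$ and varying $\psi_1$ gives, for a.e. $x$, the cell identity $a(u_1^{k,\pm}(x),v)=\lambda_0^{k,\pm}u_0^{k,\pm}(x)\,a(\chi^0,v)-\sum_{i,j}\tfrac{\partial u_0^{k,\pm}}{\partial x_j}\int_{Y^*}a_{ij}\tfrac{\partial v}{\partial y_i}dy$; since $a(\chi^0,v)=\int_{Y^*}\rho v$ by (\ref{eq3.91}), this is exactly (\ref{eq3.30}), and linearity together with the cell problems (\ref{eq3.9}) and (\ref{eq3.91}) yields the representation (\ref{eq3.31}). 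Substituting (\ref{eq3.31}) and taking $\psi_1=0$, $\psi_0\in\mathcal D(\Omega)$, the terms linear in $\lambda_0^{k,\pm}$ cancel (by the symmetry $a_{ij}=a_{ji}$ and the relations $l_j(\chi^0)=a(\chi^0,\chi^j)=\int_{Y^*}\rho\chi^j$), the gradient part assembles the homogenized matrix $q_{ij}$ of (\ref{eq3.10}), and the $\chi^0$ self-interaction $\int_{Y^*}\sum a_{ij}\tfrac{\partial\chi^0}{\partial y_i}\tfrac{\partial\chi^0}{\partial y_j}=\nu^2$ produces the quadratic term $\lambda_0^2\nu^2u_0$ of (\ref{eq3.23}); this is precisely the weak form of the quadratic pencil (\ref{eq3.29}).

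Finally, the normalization (\ref{eq2.291}) is obtained by running the Step~2 identity with $\Phi_\varepsilon$ replaced by $P_\varepsilon u_\varepsilon^{k,\pm}$ itself: $\pm1=\tfrac1\varepsilon\int_{\Omega^\varepsilon}\rho^\varepsilon(P_\varepsilon u_\varepsilon^{k,\pm})^2\,dx$ converges, after inserting (\ref{eq3.31}) and using $\int_\Omega u_0\,\partial_{x_j}u_0\,dx=0$, to a multiple of $\lambda_0^{k,\pm}\nu^2\int_\Omega|u_0^{k,\pm}|^2dx$, which gives (\ref{eq2.291}). Passing (\ref{eq3.22}) to the limit as in the proof of Theorem~\ref{t3.1} shows the $\{u_0^{k,\pm}\}_k$ are orthogonal, hence nonzero, eigenfunctions of (\ref{eq3.29}), so each $\lambda_0^{k,\pm}$ is an eigenvalue of the pencil; a counting/minimax comparison then identifies it as the $(k,\pm)^{th}$ eigenvalue and, in the simple cases, upgrades (\ref{eq3.25}) to convergence along the whole family $E$. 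I expect the two genuine obstacles to be the uniform bound on $\varepsilon\lambda_\varepsilon^{k,\pm}$ and the $\chi^0$-factorization of Step~2, since the $1/\varepsilon$ scaling, the appearance of the pencil and the normalization all hinge on correctly extracting the $O(\varepsilon)$ part of the indefinite quadratic form.
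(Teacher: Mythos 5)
Your proposal is correct in substance and follows the same overall skeleton as the paper (normalization (\ref{eq3.22}) plus a minimax bound giving $|\varepsilon\lambda_\varepsilon^{k,\pm}|\le C$, hence an $H^1_0$-bound for the extensions; Theorem \ref{t2.2} compactness; limit passage in the variational formulation with $\Phi_\varepsilon$; decoupling into (\ref{eq3.30})--(\ref{eq3.31}) and the pencil (\ref{eq3.29}); then the normalization identity). The genuine difference is the device used to absorb the factor $1/\varepsilon$ in the right-hand side. The paper invokes the convergence (\ref{eq1.14}) of Theorem \ref{t2.2} with the test function $\psi_0\otimes(\rho\chi_G)$, which is legitimate precisely because $\int_Y\rho\chi_G\,dy=M_{Y^*}(\rho)=0$; internally that result rests on writing the mean-zero function as $\Delta_y\vartheta$ and integrating by parts. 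You instead write $\tfrac1\varepsilon\rho^\varepsilon\chi_G^\varepsilon=-\mathrm{div}_x\bigl[\chi_G^\varepsilon\,(aD_y\chi^0)(\cdot/\varepsilon)\bigr]$ using the strong form and the natural Neumann condition of the cell problem (\ref{eq3.91}), and integrate by parts against $(P_\varepsilon u_\varepsilon^{k,\pm})\Phi_\varepsilon$. These are two instances of the same ``mean zero $\Rightarrow$ divergence form'' mechanism, and your resulting global homogenized identity agrees with the paper's (\ref{eq3.33}): the two extra terms you carry, $\lambda_0\sum_j l_j(\chi^0)\int_\Omega\partial_{x_j}(u_0\psi_0)\,dx$, vanish by the divergence theorem, and $a(\chi^0,v)=\int_{Y^*}\rho v\,dy$ converts your remaining terms into the paper's $\rho$-weighted ones. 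What your route buys: the role of $\chi^0$ and the origin of the quadratic term $\lambda_0^2\nu^2$ are visible already at the $\varepsilon$-level, the a priori bound gets an explicit trial-space construction (the paper only cites Vanninathan), and the same identity is recycled for the normalization. What it costs: your oscillating factors $aD_y\chi^0$ are only $L^2_{per}$, so every two-scale limit passage needs the admissible-test-function extension (test functions of the form $\phi(x)g(y)$, $\phi$ continuous, $g\in L^2_{per}$), and in the normalization step, where $\Phi_\varepsilon$ is replaced by $P_\varepsilon u_\varepsilon^{k,\pm}$ itself, the compact-support argument that kills $\Omega^\varepsilon\setminus Q^\varepsilon$ via Lemma \ref{l3.1} no longer applies, so you need a boundary-strip (Hardy-type) estimate to control $\tfrac1\varepsilon\int_{\Omega^\varepsilon\setminus Q^\varepsilon}\rho^\varepsilon(P_\varepsilon u_\varepsilon)^2dx$, plus some extra integrability to pair $(aD_y\chi^0)^\varepsilon$ with $(P_\varepsilon u_\varepsilon)D(P_\varepsilon u_\varepsilon)$. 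To be fair, the paper's own normalization step (the asserted $L^2$-weak convergence of $\tfrac1\varepsilon(P_\varepsilon u_\varepsilon)\rho^\varepsilon\chi_G^\varepsilon$ after (\ref{eq1.14})) quietly skips a comparable boundedness justification, so your proposal is at essentially the same level of rigor as the printed proof.
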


\begin{proof}
Fix $k\geq 1$, using the minimax principle, as in \cite{Vanni}, we
get a constant $C$ independent of $\varepsilon$ such that
$|\varepsilon\lambda_\varepsilon^{k,\pm}|<C$. We have
$u^{k,\pm}_\varepsilon\in V_\varepsilon$ and
\begin{equation}\label{eq3.32}
\sum_{i,j=1}^N\int_{\Omega^\varepsilon}
a_{ij}(\frac{x}{\varepsilon})\frac{\partial
  u^{k,\pm}_\varepsilon}{\partial x_j}
 \frac{\partial v}{\partial
 x_i}dx=(\varepsilon\lambda^{k,\pm}_\varepsilon)\frac{1}{\varepsilon}\int_{\Omega^\varepsilon}  \rho(\frac{x}{\varepsilon})u^{k,\pm}_\varepsilon
 v\,dx
\end{equation}
for any $v\in V_\varepsilon$. Bear in mind that
$\int_{\Omega^\varepsilon}\rho(\frac{x}{\varepsilon})(u^{k,\pm}_\varepsilon)^2
dx=\pm\varepsilon$ and choose $v=u^{k,\pm}_\varepsilon$ in (\ref{eq3.32}).
The boundedness of the sequence $(\varepsilon\lambda^{k,\pm}_\varepsilon)_{\varepsilon\in E}$
 and the ellipticity assumption (\ref{eq1.2}) imply at once by means of Proposition \ref{p3.1} that the
 sequence $(P_\varepsilon u^{k,\pm}_\varepsilon)_{\varepsilon\in E}$ is bounded in $H^1_0(\Omega)$.
  Theorem \ref{t2.2} applies and
gives us $ \textbf{u}^{k,\pm}=(u_0^{k,\pm},u_1^{k,\pm})\in
\mathbb{F}^1_0 $ such that for some $\lambda_0^{k,\pm}\in\mathbb{R}$
and some subsequence $E'\subset E$ we have
(\ref{eq3.25})-(\ref{eq3.28}), where (\ref{eq3.27}) is a direct
consequence of (\ref{eq3.26}) by the Rellich-Kondrachov theorem. For
fixed $\varepsilon\in E'$, let $\Phi_\varepsilon$ be as in Lemma
\ref{l3.2}. Multiplying both sides of  the first equality in
(\ref{eq1.1}) by $\Phi_\varepsilon$ and integrating over
$\Omega^\varepsilon$ leads us to the variational
$\varepsilon$-problem
\begin{equation*}
\sum_{i,j=1}^N\int_{\Omega^\varepsilon}
a_{ij}(\frac{x}{\varepsilon})\frac{\partial P_\varepsilon
u^{k,\pm}_\varepsilon}{\partial x_j}
 \frac{\partial \Phi_\varepsilon}{\partial
 x_i}dx=(\varepsilon\lambda^{k,\pm}_\varepsilon)\frac{1}{\varepsilon}\int_{\Omega^\varepsilon}(P_\varepsilon u^{k,\pm}_\varepsilon)
 \rho(\frac{x}{\varepsilon})
\Phi_\varepsilon\,dx.
\end{equation*}
Sending $\varepsilon\in E'$ to $0$, keeping
(\ref{eq3.25})-(\ref{eq3.28}) and Lemma \ref{l3.2} in mind, we
obtain
\begin{equation}\label{eq3.33}
a_\Omega(\mathbf{u}^{k,\pm},\Phi)=\lambda^{k,\pm}_0\iint_{\Omega\times
Y^*}\left(u_1^{k,\pm}(x,y)\psi_0(x)\rho(y)+
    u_0^{k,\pm}\psi_1(x,y)\rho(y)\right)dxdy
    \end{equation}
The right-hand side follows as explained below. Using the
decomposition
$\Omega^\varepsilon=Q^\varepsilon\cup(\Omega^\varepsilon\setminus
Q^\varepsilon)$ and the equality
$Q^\varepsilon=\Omega\cap\varepsilon G$ we arrive at

\begin{eqnarray*}
   \frac{1}{\varepsilon}\int_{\Omega^\varepsilon}(P_\varepsilon
u^{k,\pm}_\varepsilon)\rho(\frac{x}{\varepsilon})
\Phi_\varepsilon\,dx&=&\frac{1}{\varepsilon}\int_{\Omega}(P_\varepsilon
u^{k,\pm}_\varepsilon)\psi_0(x)
 \rho(\frac{x}{\varepsilon})\chi_G(\frac{x}{\varepsilon})
 \,dx \\
   &+& \int_{\Omega}(P_\varepsilon u^{k,\pm}_\varepsilon)\psi_1(x,\frac{x}{\varepsilon})
 \rho(\frac{x}{\varepsilon})\chi_G(\frac{x}{\varepsilon})
 \,dx+o(1).
\end{eqnarray*}
On the one hand we have
$$
\lim_{E'\ni\varepsilon\to 0}\int_{\Omega}(P_\varepsilon
u^{k,\pm}_\varepsilon)\psi_1(x,\frac{x}{\varepsilon})
 \rho(\frac{x}{\varepsilon})\chi_G(\frac{x}{\varepsilon})
 \,dx=\iint_{\Omega\times
Y} u_0^{k,\pm}\psi_1(x,y)\rho(y)\chi_G(y)\,dxdy.
$$
On the other hand, owing to (\ref{eq1.14}) of Theorem \ref{t2.2}, the following holds:
$$
\lim_{E'\ni\varepsilon\to 0}\frac{1}{\varepsilon}\int_{\Omega}
(P_\varepsilon u^{k,\pm}_\varepsilon)\psi_0(x)
 \rho(\frac{x}{\varepsilon})\chi_G(\frac{x}{\varepsilon})
 \,dx=\iint_{\Omega\times
Y}u_1^{k,\pm}(x,y)\psi_0(x)\rho(y)\chi_G(y)\,dxdy.
$$
Indeed $\rho\chi_G\in L^2_{per}(Y)/\mathbb{R}$ as we clearly have $ \int_Y
\rho(y)\chi_G(y)dy=\int_{Y^*}\rho(y)dy=0 $. We have just proved that
$(\lambda^{k,\pm}_0,\textbf{u}^{k,\pm})\in\mathbb{R}\times
\mathbb{F}^1_0$ solves the following \textit{global homogenized
spectral problem}:
\begin{equation}\label{eq3.34}\left\{\begin{aligned}
  &\text{Find }(\lambda,\textbf{u})\in\mathbb{C}\times
\mathbb{F}^1_0 \text{ such that }\\&
  a_\Omega(\mathbf{u},\Phi)=\lambda\iint_{\Omega\times
Y^*}\left(u_1(x,y)\psi_0(x)\rho(y)+
    u_0\psi_1(x,y)\rho(y)\right)dxdy \\&
   \text{for all } \Phi\in
\mathbb{F}^1_0 .
   \end{aligned}\right.
\end{equation}
To prove (i), choose $\Phi=(\psi_0,\psi_1)$ in (\ref{eq3.33})  such
that $\psi_{0}=0$ and $\psi_{1}=\varphi\otimes v_1$, where
$\varphi\in\mathcal{D}(\Omega)$ and $v_1\in  H^1_{per}(Y^*)/\mathbb{R}$ to get
\[
\int_{\Omega}\varphi(x)\left[\sum_{i,j=1}^N\int_{Y^*}a_{ij}(y)\left(\frac{\partial
u^{k,\pm}_0}{\partial x_j}+\frac{\partial u^{k,\pm}_1}{\partial y_j}
\right)\frac{\partial v_1}{\partial
y_i}dy\right]dx=\int_\Omega\varphi(x)\left[\lambda_0^{k,\pm}u_0^{k,\pm}(x)\int_{Y^*}v_1(y)\rho(y)dy\right]dx
\]
Hence by the arbitrariness of $\varphi$, we have a.e. in $\Omega$
\[
\sum_{i,j=1}^N\int_{Y^*}a_{ij}(y)\left(\frac{\partial
u^{k,\pm}_0}{\partial x_j}+\frac{\partial u^{k,\pm}_1}{\partial y_j}
\right)\frac{\partial v_1}{\partial
y_i}dy=\lambda_0^{k,\pm}u_0^{k,\pm}(x)\int_{Y^*}v_1(y)\rho(y)dy
\]
for any  $v_1$ in  $ H^1_{per}(Y^*)/\mathbb{R}$, which is nothing but (\ref{eq3.30}).

Fix $x\in\overline{\Omega}$, multiply both sides of (\ref{eq3.9}) by
$-\frac{\partial u_0^{k,\pm}}{\partial x_j}(x)$ and sum over $1\leq
j\leq N$. Adding side by side to the resulting equality that
obtained after multiplying both sides of (\ref{eq3.91}) by
$\lambda_0^{k,\pm}u_0^{k,\pm}(x)$, we realize that
$z(x)=-\sum_{j=1}^N\frac{\partial u_0^{k,\pm}}{\partial
x_j}(x)\chi^j(y)+\lambda_0^{k,\pm}u_0^{k,\pm}(x)\chi^0(y)$ solves
(\ref{eq3.30}). Hence
\begin{equation}\label{eq3.36}
     u_1^{k,\pm}(x,y)=\lambda^{k,\pm}_0 u_0^{k,\pm}(x)\chi^0(y)-\sum_{j=1}^N\frac{\partial u^{k,\pm}_0}{\partial
   x_j}(x)\chi^j(y)\quad\text{a.e. in } (x,y)\in\Omega\times Y^*.
\end{equation}
by uniqueness of the solution to the variational problem
(\ref{eq3.30}). Thus (\ref{eq3.31}).

Considering now $\Phi=(\psi_0,\psi_1)$ in (\ref{eq3.33})  such that
$\psi_{0}\in\mathcal{D}(\Omega)$ and $\psi_{1}=0$ we get
$$
\sum_{i,j=1}^N\iint_{\Omega\times Y^*}a_{ij}(y)\left(\frac{\partial
u_0^{k,\pm}}{\partial x_j}+\frac{\partial u_1^{k,\pm}}{\partial
y_j}\right)\frac{\partial \psi_0}{\partial
x_i}dxdy=\lambda_0^{k,\pm}\iint_{\Omega\times
Y^*}u_1^{k,\pm}(x,y)\rho(y)\psi_0(x)dxdy,
$$
which by means of (\ref{eq3.36}) leads to
\begin{eqnarray}
    &&\sum_{i,j=1}^N  \int_{\Omega}q_{ij}\frac{\partial
u_0^{k,\pm}}{\partial x_j}\frac{\partial \psi_0}{\partial
x_i}dx+\lambda_0^{k,\pm}\sum_{i,j=1}^N\int_\Omega
u_0^{k,\pm}(x)\frac{\partial \psi_0}{\partial
x_i}dx\left(\int_{Y^*}a_{ij}(y)\frac{\partial \chi^0}{\partial y_j
}(y)dy\right)\nonumber\\
&&=-\lambda_0^{k,\pm}\sum_{j=1}^N\int_\Omega \frac{\partial
u_0^{k,\pm}}{\partial x_j}
\psi_0(x)dx\left(\int_{Y^*}\rho(y) \chi^j(y)
dy\right)\label{eq3.37}\\
&&+(\lambda_0^{k,\pm})^2\int_\Omega
u_0^{k,\pm}(x)\psi_0(x)dx\left(\int_{Y^*}\rho(y)\chi^0(y)dy\right)\nonumber.
\end{eqnarray}
Choosing $\chi^l\ (1\leq l\leq N)$ as test function in
(\ref{eq3.91}) and $\chi^0$ as test function in (\ref{eq3.9}) we
observe that
$$
\sum_{j=1}^N\int_{Y^*}a_{lj}(y)\frac{\partial \chi^0}{\partial y_j
}(y)dy=\int_{Y^*}\rho(y) \chi^l(y)dy=a(\chi^l,\chi^0)\quad
(l=1,\cdots N).
$$
Thus, in (\ref{eq3.37}), the second term in the left-hand side is
equal to the first one in the right-hand side. This leaves us with
\begin{equation}\label{eq3.38}
\int_{\Omega}q_{ij}\frac{\partial u_0^{k,\pm}}{\partial
x_j}\frac{\partial \psi_0}{\partial
x_i}dx=(\lambda_0^{k,\pm})^2\int_\Omega
u_0^{k,\pm}(x)\psi_0(x)dx\left(\int_{Y^*}\rho(y)\chi^0(y)dy\right).
\end{equation}
Choosing $\chi^0$ as test function in (\ref{eq3.91}) reveals that
$$
\int_{Y^*}\rho(y)\chi^0(y)dy=a(\chi^0,\chi^0)=\nu^2.
$$
Hence
$$
\sum_{i,j=1}^N\int_{\Omega}q_{ij}\frac{\partial
u_0^{k,\pm}}{\partial x_j}\frac{\partial \psi_0}{\partial
x_i}dx=(\lambda_0^{k,\pm})^2\nu^2\int_\Omega
u_0^{k,\pm}(x)\psi_0(x)dx,
$$
and
$$
-\sum_{i,j=1}^N\frac{\partial}{\partial
x_i}\left(q_{ij}\frac{\partial u_0^{k,\pm}}{\partial
x_j}(x)\right)=(\lambda_0^{k,\pm})^2\nu^2 u_0^{k,\pm}(x)\text{ in }
\Omega.
$$
Thus the convergence (\ref{eq3.25}) holds for the whole sequence
$E$.
As regards (\ref{eq2.291}), we notice that for fixed $k\geq 1$ and any $\phi\in\mathcal{D}(\Omega)$ one has (keep (\ref{eq1.14}) in mind)
$$
\lim_{E'\ni\varepsilon\to 0}\frac{1}{\varepsilon}\int_{\Omega}(P_\varepsilon u_{\varepsilon}^{k,\pm})\phi(x)
\rho(\frac{x}{\varepsilon})\chi_G(\frac{x}{\varepsilon})dx=\iint_{\Omega\times Y^*}u_1^{k,\pm}(x,y)\phi(x)\rho(y)\,dxdy.
$$
Hence, as $E'\ni\varepsilon\to 0$
 $$\frac{1}{\varepsilon}(P_\varepsilon  u_{\varepsilon}^{k,\pm})\rho^\varepsilon\chi_G^\varepsilon\rightharpoonup
 \int_{Y^*}u_1^{k,\pm}(\cdot,y)\rho(y)\, dy\quad \text{in }L^2(\Omega)-\text{weak}.
 $$
Using once again the decomposition $ \Omega^\varepsilon=Q^\varepsilon\cup
(\Omega^\varepsilon\setminus Q^\varepsilon)$ and the equality
$Q^\varepsilon=\Omega\cap\varepsilon G$, we get as $E'\ni\varepsilon\to 0$
$$
\frac{1}{\varepsilon}\int_{\Omega^\varepsilon}(P_\varepsilon u_{\varepsilon}^{k,\pm})(P_\varepsilon u_{\varepsilon}^{l,\pm})
\rho(\frac{x}{\varepsilon})\,dx\to
\iint_{\Omega\times Y^*}u_1^{k,\pm}(x,y)u_0^{l,\pm}(x)\rho(y)\,dxdy,
$$
for fixed $l\geq 1$. This together with (\ref{eq3.22}) and (\ref{eq3.36}) yields
\begin{equation}\label{eq4}
    \lambda_0^{k,\pm}\nu^2\int_{\Omega}u_0^{l,\pm}u_0^{k,\pm}\,dx-\sum_{j=1}^{N}a(\chi^j,\chi^0)\int_\Omega\frac{\partial
    u_0^{k,\pm}}{\partial x_j}u_0^{l,\pm}\,dx=\pm\delta_{k,l},\quad k,l=1,2,\cdots
\end{equation}
If $k=l$, then by Green's formula the sum in the left-hand side
vanishes and (\ref{eq4}) reduces to the desired result. This
concludes the proof.
\end{proof}
\begin{remark}
\begin{itemize}
      \item Permuting $k$ and $l$ in (\ref{eq4}) and adding side by side the resulting equality to (\ref{eq4}) we
      realize that the eigenfunctions $\{u_0^{k,\pm}\}_{k=1}^\infty$ are orthonormalized by
      $$
    \int_{\Omega}u_0^{l,\pm}(x)u_0^{k,\pm}(x)dx=\frac{\pm 2\delta_{k,l}}{\nu^2(\lambda_0^{k,\pm}+\lambda_0^{l,\pm})}\qquad  k,l=1,2,\cdots
      $$
     \item If $\lambda_0^{k,\pm}$ is simple (this is the case for
$\lambda_0^{1,\pm}$), then by Theorem \ref{t3.3},
$\lambda_\varepsilon^{k,\pm}$ is also simple, for small
$\varepsilon$, and we can choose the eigenfunctions
$u_\varepsilon^{k,\pm}$ such that the convergence results
(\ref{eq3.26})-(\ref{eq3.28}) hold for the whole sequence $E$. In
this case, the following corrector type result holds:
\[
\lim_{E\ni\varepsilon\to 0}\left\|D_x(P_\varepsilon u_\varepsilon^{k,\pm}(\cdot))-D_x u_0^{k,\pm}(\cdot)-D_y u_1^{k,\pm}
(\cdot,\frac{.}{\varepsilon}) \right\|_{L^2(\Omega)^N}=0.
\]
\end{itemize}
\end{remark}
\subsection*{Acknowledgments} The author is grateful to Dr. Jean Louis Woukeng for helpful
discussions.

\end{document}